\theoremstyle{definition}
\newtheorem*{theorem*}{Theorem}
\newtheorem{theorem}{Theorem}[section]
\newtheorem{lemma}[theorem]{Lemma}
\newtheorem{remark}[theorem]{Remark}
\newtheorem{corollary}[theorem]{Corollary}
\newtheorem{definition}[theorem]{Definition}
\newtheorem*{conjecture}{Conjecture}
\title{Single-SEM Schubert polynomials}
\author{Dora Woodruff}
\date{}
\begin{document}
\maketitle

\begin{abstract}
    We prove a pattern-avoidance characterization of $w \in S_n$ such that the Schubert polynomial $\mathfrak{S}_w$ is a standard elementary monomial. This characterization tells us which quantum Schubert polynomials are easiest to compute. We solve a similar pattern-avoidance problem for complete homogeneous monomials. 
\end{abstract}

\section{Introduction}

Schubert polynomials form an important basis for the polynomial ring $\mathbb{Z}[x_1, x_2 \dots]$. For each $w \in S_n$, there is a Schubert polynomial $\mathfrak{S}_w$ in variables $x_1, \dots x_{n-1}$. It is natural to wonder when a Schubert polynomial is just one monomial. This question has a concise answer in terms of pattern avoidance: 

\begin{theorem}
    The Schubert polynomial $\mathfrak{S}_w$ is a single monomial $x_1^{l_1}x_2^{l_2}\dots x_{n-1}^{l_{n-1}}$ if and only if the following equivalent conditions hold:
    \begin{enumerate}
        \item The \emph{Lehmer code} of $w$ is nonincreasing. 
        \item $w$ avoids the pattern $132$.
    \end{enumerate}
\end{theorem}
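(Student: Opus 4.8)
\medskip
\noindent\emph{Proof proposal.} The plan is to factor the theorem through the classical notion of a \emph{dominant} permutation — one whose Lehmer code $c(w)=(c_1,c_2,\dots)$ is weakly decreasing — and to establish the chain of equivalences: $\mathfrak{S}_w$ is a single monomial $\iff$ $\mathfrak{S}_w = x^{c(w)}$ $\iff$ $c(w)$ is nonincreasing $\iff$ $w$ avoids $132$, where $x^{c(w)}:=x_1^{c_1}x_2^{c_2}\cdots$. The first step rests on two standard facts about Schubert polynomials, which I would recall with references (Macdonald, \emph{Notes on Schubert Polynomials}; Manivel): (i) $\mathfrak{S}_w$ is homogeneous of degree $\ell(w)=|c(w)|$ and $x^{c(w)}$ occurs in it with coefficient $1$ — indeed it is the lexicographically largest monomial; and (ii) if $w$ is dominant then $\mathfrak{S}_w=x^{c(w)}$. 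From (i), if $\mathfrak{S}_w$ is a single monomial then that monomial must be $x^{c(w)}$; combined with (ii) this yields the first two equivalences once the next step is available. (If one prefers a self-contained treatment, (ii) can be proved by induction on $\ell(w)$ via the divided-difference recursion, or by identifying the unique reduced pipe dream of a dominant permutation with its Rothe diagram.)

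For the implication $\mathfrak{S}_w = x^{c(w)} \Rightarrow c(w)$ nonincreasing, I would argue with divided differences. Recall that $\partial_i\mathfrak{S}_w = \mathfrak{S}_{ws_i}$ when $w(i)>w(i+1)$ and $\partial_i\mathfrak{S}_w = 0$ when $w(i)<w(i+1)$, and that $\partial_i f = 0$ iff $f$ is symmetric in $x_i,x_{i+1}$; since Schubert polynomials are nonzero, $\partial_i\mathfrak{S}_w = 0 \iff w(i)<w(i+1)$. Applying this to $\mathfrak{S}_w=x^{c(w)}$: the monomial $x^{c(w)}$ is symmetric in $x_i,x_{i+1}$ exactly when $c_i=c_{i+1}$, so $c_i=c_{i+1}\iff w(i)<w(i+1)$. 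I also want the elementary counting fact that $w(i)>w(i+1)$ forces $c_i>c_{i+1}$ (then $i+1$ is itself counted in $c_i$, and every inversion $(i+1,j)$ with $j>i+1$ is also an inversion $(i,j)$). Putting these together, a strict ascent $c_i<c_{i+1}$ would force $w(i)>w(i+1)$ and hence $c_i>c_{i+1}$, a contradiction; so $c(w)$ is weakly decreasing.

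It remains to prove the combinatorial equivalence ``$c(w)$ nonincreasing $\iff$ $w$ avoids $132$,'' which I would do by contrapositives. If $c_i<c_{i+1}$ for some $i$, then, as above, $w(i)<w(i+1)$, and the sets $\{j>i+1:w(j)<w(i)\}\subseteq\{j>i+1:w(j)<w(i+1)\}$ have different sizes, so there is $j>i+1$ with $w(i)<w(j)<w(i+1)$; then $(i,i+1,j)$ is an occurrence of $132$. Conversely, suppose $w$ contains $132$ and let $a$ be the \emph{largest} index playing the role of the ``$1$'', i.e.\ the largest $a$ admitting $a<b<c$ with $w(a)<w(c)<w(b)$. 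A short check on $w(a+1)$ shows $w(a)<w(c)<w(a+1)$: if instead $w(a+1)<w(c)$ (which covers the case $w(a+1)<w(a)$, since $w(a)<w(c)$), then $a+1\neq b$ (else $w(c)<w(b)=w(a+1)$), hence $a+1<b<c$ with $w(a+1)<w(c)<w(b)$, making $a+1$ a larger ``$1$'' — contradiction. With $w(a)<w(c)<w(a+1)$ and $c>a+1$, the index $c$ witnesses $c_a<c_{a+1}$, so the code has an ascent. This closes the chain.

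I expect the only genuine subtleties to be bookkeeping: pinning down the divided-difference conventions and the dictionary between descents of $w$, ascents/descents of the code, and vanishing of $\partial_i$ in the second step; and handling the ``take the rightmost $1$'' maximality argument carefully in the last step. Everything else reduces to the cited classical properties of dominant permutations and of the leading monomial of $\mathfrak{S}_w$.
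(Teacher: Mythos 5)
The paper states this theorem as known background (it is the classical characterization of \emph{dominant} permutations) and gives no proof, so there is nothing internal to compare against; judged on its own, your argument is correct and complete. Your route — (a) the monomial $x^{c(w)}$ occurs in $\mathfrak{S}_w$ with coefficient $1$, so a single-monomial Schubert polynomial must equal $x^{c(w)}$; (b) divided differences force $c_i=c_{i+1}$ at ascents and the inversion count forces $c_i>c_{i+1}$ at descents, so $\mathfrak{S}_w=x^{c(w)}$ implies $c(w)$ is weakly decreasing; (c) a direct two-way combinatorial argument identifying ascents of the code with occurrences of $132$ — is essentially the standard one, and both ingredients you import are ones the paper itself relies on elsewhere (the extremality of $x^{L(w)}$ in Lemma~3.7/\ref{lemma:semnecessary1}, the divided-difference vanishing criterion in Section~2). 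The maximality argument in step (c), taking the largest index that can play the role of the ``$1$,'' is handled correctly, including the degenerate cases $a+1=b$ and $a+1=c$. One small caveat: $x^{c(w)}$ is the maximal monomial in the \emph{reverse} lexicographic order (equivalently the lex-\emph{smallest}), not the lex-largest as your parenthetical suggests — e.g.\ for $w=1432$ the lex-largest monomial is $x_1^2x_2$ while $x^{c(w)}=x_2^2x_3$ — but this does not affect your proof, since all you use is that $x^{c(w)}$ appears with coefficient $1$, which is correct.
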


Such permutations are called \textit{dominant}. Our first goal is to give an analogous description of when a Schubert polynomial is a single \textit{standard elementary monomial}. A standard elementary monomial (SEM) is a product
\[e^1_{a_1}e^2_{a_2}e^3_{a_3}\dots\ = e_{\vec{a}}\]
where $e^i_j$ is the degree-$j$ elementary symmetric polynomial in $i$ variables, and only finitely many of the $a_i$'s are nonzero. Fomin, Gelfand, and Postnikov \cite{quantumschubert} showed that SEMs form a basis for the polynomial ring $\mathbb{Z}[x_1, x_2 \dots]$ and that this basis has several deep connections with Schubert polynomials. 

Specifically, Fomin, Gelfand and Postnikov \cite{quantumschubert} studied SEMs in the context of \emph{quantum Schubert polynomials}. They showed that if $\mathfrak{S}_w = \sum_{\vec{a}} c_{\vec{a}} e_{\vec{a}}$ is an expansion of $\mathfrak{S}_w$ into SEMs, then the quantum Schubert polynomial $\mathfrak{S}_w^q$ can be written as $\sum_{\vec{a}}c_{\vec{a}}E_{\vec{a}}$, where $E_{\vec{a}}$ is a \emph{quantum SEM}. Essentially, if we can compute the SEM expansion of $\mathfrak{S}_w$, then we can also easily compute $\mathfrak{S}_w^q$. Asking which Schubert polynomials are single SEMs thus asks, `which quantum Schubert polynomials are easiest to compute?'

Fomin, Gelfand, and Postnikov \cite{quantumschubert} also studied the quantization of complete homogeneous monomials (CHMs). A CHM is analogously a product 

\[h^1_{a_1}h^2_{a_2}\dots = h_{\vec{a}}\]

where $h^i_j$ is the degree-$j$ homogeneous symmetric polynomial in $i$ variables and only finitely many of the $a_i$'s are nonzero. They showed that the quantization map sends a complete homogeneous polynomial to a certain determinant of quantum elementary symmetric polynomials, and more generally sends a CHM to a certain product of determinants of SEMs. Thus, CHMs are also relatively straightforward to quantize. 

A fair amount of attention has already been given to SEM expansions of Schubert polynomials. Winkel \cite{schurandschubert} gave a determinantal formula for Schur polynomials and observed some interesting patterns in SEM expansions for Schubert polynomials more generally; for instance, the coefficients tend to be small in absolute value. Hatam, Johnson, Liu, and Macaulay \cite{determinantal} gave a determinantal formula for the SEM expansion of $\mathfrak{S}_w$ when $w$ avoids a list of $13$ patterns. 

Despite this progress, SEM expansions for Schubert polynomials more generally are far from well understood: giving a cancellation-free formula for the expansion of a Schubert polynomial into SEMs remains an open problem, and many other properties of these expansions remain mysterious. 

Asking for the analogues of dominant permutations in the SEM basis is thus a natural question. 

\begin{theorem}\label{patternavoidanceSEMs}
A Schubert polynomial $\mathfrak{S}_w$ is a single standard elementary monomial if and only if $w$ avoids the patterns $312$ and $1432$. 
\end{theorem}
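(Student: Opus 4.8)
The plan is to attack both directions via explicit combinatorics of the Lehmer code. First, recall that a single standard elementary monomial $e_{\vec a}$ is itself a polynomial, and one should first ask: which $e_{\vec a}$ are themselves Schubert polynomials, and what is the permutation? I would begin by computing the SEM $e_{\vec a} = e^1_{a_1}e^2_{a_2}\cdots$ directly and identifying exactly when it equals some $\mathfrak S_w$; since $e^i_{a_i}$ is the $i$-variable elementary symmetric polynomial, the leading term (in any fixed monomial order refining degree) of $e_{\vec a}$ is $\prod_i x_i^{a_i+a_{i+1}+\cdots}$, but more usefully $e_{\vec a}$ has lowest-index-heavy support. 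A cleaner route: Schubert polynomials that are products $\prod_i e^{k_i}_{1}$ of degree-one elementary symmetric polynomials correspond to the well-known case where $w$ is a product of "Grassmannian-type" reflections, and the general SEM should correspond to a flagged version. So Step 1 is to pin down the bijection $\vec a \leftrightarrow w$ on the subset of $w$ for which $\mathfrak S_w$ is a single SEM, presumably by induction on $n$ using the transition/Monk-type recursion or the Billey–Jockusch–Stanley formula for $\mathfrak S_w$ as a sum over reduced words / RC-graphs.

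Step 2 is the "only if" direction. I would show that if $w$ contains $312$ or $1432$, then $\mathfrak S_w$ has at least two monomials, hence cannot be a single SEM (a single SEM, while not a monomial, is a very constrained polynomial — but the cleanest contradiction is probably at the level of \emph{number of terms} or of a specific coefficient). Concretely: using the fact that $\mathfrak S_w$ for $w=312$ equals $x_1^2+x_1x_2$ and for $w=1432$ equals $x_1x_2x_3 + x_1x_2^2 + x_1^2 x_2 + x_1^2 x_3 + x_1 x_2 x_3$ (or whatever the correct expansion is — I'd compute it), I would check these are \emph{not} single SEMs, and then argue that pattern containment forces a corresponding sub-sum to appear in $\mathfrak S_w$ that cannot be "completed" to a single SEM. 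The rigorous tool here is that pattern containment $u \le w$ gives, via Billey's formula or via the projection of RC-graphs, an inequality/divisibility relating $\mathfrak S_w$ and $\mathfrak S_u$; alternatively one uses that the set of $w$ whose Schubert polynomial is a single SEM is closed under pattern containment's opposite — i.e. it is a pattern-avoidance class — which itself needs proof (Step 2a: show the property "$\mathfrak S_w$ is a single SEM" is inherited by patterns, so that the forbidden set is generated by minimal bad patterns; then Step 2b: enumerate small permutations to find the minimal bad patterns are exactly $312$ and $1432$).

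Step 3 is the "if" direction: assuming $w$ avoids $312$ and $1432$, produce the SEM. Here I expect to induct on $n$ (or on $\ell(w)$). A $312$-and-$1432$-avoiding permutation has a structural description — I would first derive it, likely: the Lehmer code $c(w) = (c_1, c_2, \ldots)$ satisfies a condition like "$c_i \le c_{i+1}$ whenever $c_i \ne 0$" combined with a bound ruling out $1432$ (some "no two separated descents of a certain shape" condition). Avoiding $312$ alone is classical (these are the permutations whose code is... and they're counted by Catalan-ish or $2^{n-1}$-type numbers); adding $1432$ cuts it down. Given the structural description of the code, I'd exhibit $\vec a$ explicitly from $c(w)$ and verify $\mathfrak S_w = e_{\vec a}$ either by checking both sides satisfy the same divided-difference recursion $\partial_i$, or by matching RC-graphs/pipe dreams: the RC-graphs of a $312,1432$-avoiding $w$ should all factor through a single "column-convex" shape whose generating function is manifestly a product of flagged $e$'s.

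The main obstacle I anticipate is the "if" direction, specifically \textbf{identifying the correct $\vec a$ from the code and proving equality} — matching a Schubert polynomial to a prescribed SEM is not a one-line divided-difference check because $e_{\vec a}$ is not a monomial and applying $\partial_i$ to it is messy. I would likely route around this by the quantization dictionary is not available here; instead the robust approach is to set up a bijection between the RC-graphs (pipe dreams) of $w$ and the monomials appearing in the expansion of $e_{\vec a}$, using the avoidance hypotheses to show every pipe dream of $w$ is "northwest-justified into disjoint horizontal strips," one strip per factor $e^i_{a_i}$. Getting that bijection clean — in particular handling the $1432$-avoidance to prevent two strips from interacting — is where the real work lies.
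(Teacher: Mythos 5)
Your proposal is a plan rather than a proof, and its central device for the ``only if'' direction does not work. You propose to show that if $w$ contains $312$ or $1432$ then $\mathfrak{S}_w$ ``has at least two monomials, hence cannot be a single SEM.'' This criterion is false in both directions: a single SEM such as $e^2_1 = x_1+x_2$ has two monomials, and conversely $\mathfrak{S}_{312} = x_1^2$ (not $x_1^2+x_1x_2$ as you wrote) is a \emph{single monomial} yet is not a single SEM, because $e^1_2 = e_2(x_1) = 0$ forces any SEM containing $x_1^2$ in its leading term to involve $e^2_2$, introducing $x_2$. So the obstruction is never about the number of monomials. Your fallback (Step 2a), that the property ``$\mathfrak{S}_w$ is a single SEM'' is inherited by patterns, is exactly as hard as the theorem itself in this direction: it is a consequence of the theorem but you give no independent argument for it, and it is not a standard fact (the SEM basis is sensitive to the number of variables in each factor, so the usual specialization arguments that work for dominance do not transfer). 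The paper instead proves necessity concretely: a reverse-lexicographic leading-monomial comparison shows the Lehmer code can drop by at most $1$ at each step (this kills $312$-containment), and an induction on $\ell(w)$ using the divided-difference recursion together with Monk's rule produces two distinct Schubert summands in $e^{k}_1\mathfrak{S}_{ws_k}$ whenever the code rises by $2$ or rises twice without an intermediate drop (this kills $1432$-containment). Nothing in your sketch substitutes for either argument.

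Your ``if'' direction is closer in spirit to the paper's, which also characterizes $312$- and $1432$-avoidance by explicit conditions on the Lehmer code and then works with pipe dreams. But the real content there is a specific decomposition: the bottom pipe dream of such a $w$ is the bottom pipe dream of a dominant permutation plus a few \emph{outer crossings} arranged in vertical columns, and $1432$-avoidance guarantees that the only available ladder moves are simple ones sliding each outer crossing along its own diagonal, independently of the other columns; each outer column then contributes a factor $e^i_j$. Your sketch gestures at ``disjoint horizontal strips, one per factor $e^i_{a_i}$,'' which has the geometry backwards (horizontal strips of crossings sliding along diagonals generate complete homogeneous factors $h^i_j$, which is how the paper handles Theorem \ref{patternavoidanceCHMs}; elementary factors come from columns). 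More importantly, the independence of the pieces under ladder moves is precisely the step you defer as ``where the real work lies,'' so the sufficiency argument is also not yet a proof.
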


We also consider the analogous question in the setting of CHMs. The relationship between Schubert polynomials and CHMs has been less explored so far. 

\begin{theorem}\label{patternavoidanceCHMs}
A Schubert polynomial $\mathfrak{S}_w$ is a complete homogeneous monomial if and only if $w$ avoids the patterns $321$ and $231$. 
\end{theorem}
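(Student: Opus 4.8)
The plan is to begin by observing that the complete homogeneous monomial is forced. If $\mathfrak{S}_w = h_{\vec a} = \prod_{i\geq 1} h_{a_i}(x_1,\dots,x_i)$, then comparing the distinguished bottom monomials of the two sides gives $\vec a = \mathrm{code}(w)$: the monomial of $\mathfrak{S}_w$ with lexicographically least exponent vector is the Lehmer-code monomial $x^{\mathrm{code}(w)}$, occurring with coefficient $1$, while a one-line check shows the lexicographically least monomial of $h_{\vec a}$ is $x^{\vec a}$, also with coefficient $1$ --- in the $i$th factor one must take $x_i^{a_i}$. Hence Theorem~\ref{patternavoidanceCHMs} reduces to: $\mathfrak{S}_w = h_{\mathrm{code}(w)}$ if and only if $w$ avoids $321$ and $231$.

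For the ``if'' direction I would start from a structure theorem. Looking at the position $p$ of the largest entry of $w$: avoiding $321$ forces $w$ to be increasing on the positions after $p$, and avoiding $231$ then forces every entry before $p$ to be smaller than every entry after $p$, so $w = w' \oplus w_{1,n-p}$ with $w'$ again avoiding both patterns, where $w_{1,j} = [\,j{+}1,1,2,\dots,j\,]$ is the single-row Grassmannian permutation. Iterating, $w$ avoids $321$ and $231$ exactly when $w = w_{1,j_1}\oplus\cdots\oplus w_{1,j_r}$, equivalently when $\mathrm{code}(w)$ is a concatenation of blocks $(j,0^j)$; the reverse implication is immediate, since neither pattern can straddle a direct sum. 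For such $w$, the Rothe diagram $D(w)$ is the disjoint union of horizontal strips $S_k = \{(d_k,d_k),(d_k,d_k{+}1),\dots,(d_k,d_k{+}j_k{-}1)\}$, $d_k = 1+\sum_{l<k}(j_l{+}1)$, which lie in pairwise distinct rows and pairwise disjoint sets of columns; each $S_k$ is the Rothe diagram of the Grassmannian permutation $w_{d_k,j_k}$, whose Schubert polynomial is $h_{j_k}(x_1,\dots,x_{d_k})$. Since a Kohnert move slides a box upward inside its own column and the $S_k$ occupy disjoint columns, the Kohnert diagrams of $D(w)$ amount to an independent choice of a Kohnert diagram on each $S_k$, so Kohnert's rule gives $\mathfrak{S}_w = \prod_k \mathfrak{S}_{w_{d_k,j_k}} = \prod_k h_{j_k}(x_1,\dots,x_{d_k}) = h_{\mathrm{code}(w)}$. (Alternatively, this factorization follows by induction on $\ell(w)$ using divided differences, via $\partial_i h_j(x_1,\dots,x_i) = h_{j-1}(x_1,\dots,x_{i+1})$ together with the fact that a Schubert polynomial is pinned down by all of its divided differences and a single monomial.)

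For the ``only if'' direction, suppose $\mathfrak{S}_w$ is a complete homogeneous monomial, so it equals $h_{\mathrm{code}(w)}$; setting $x_2 = x_3 = \cdots = 0$ collapses each factor $h_c(x_1,\dots,x_i)$ to $x_1^c$, hence $h_{\mathrm{code}(w)}$ to $x_1^{\ell(w)}$, which is a nonzero monomial unless $w = \mathrm{id}$. So $x_1^{\ell(w)}$ occurs in $\mathfrak{S}_w$. On the other hand, a short Rothe-diagram computation shows that $w$ contains $321$ or $231$ precisely when some column of $D(w)$ holds at least two boxes: boxes in rows $i_1 < i_2$ of a column $j$ produce positions $i_1 < i_2 < w^{-1}(j)$ with $w(i_1), w(i_2) > w(w^{-1}(j)) = j$ --- a $321$ or a $231$ pattern --- and conversely a pattern at positions $a < b < c$ puts boxes in rows $a$ and $b$ of column $w(c)$. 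Since Kohnert moves preserve the number of boxes in each column, a doubly occupied column of $D(w)$ forces every Kohnert diagram of $w$ to have two boxes in that column, so no Kohnert diagram lies entirely in row $1$ and therefore $[x_1^{\ell(w)}]\mathfrak{S}_w = 0$. Contraposing, $\mathfrak{S}_w$ being a CHM forces $w$ to avoid $321$ and $231$, completing the equivalence.

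I expect the main obstacle to lie in the factorization step of the ``if'' direction: the claim that the Kohnert diagrams of a diagram supported on disjoint blocks of columns decompose as an independent choice over the blocks requires care, because the ``rightmost box in its row'' rule couples boxes in a common row even when their columns are disjoint (the divided-difference induction avoids this but comes with its own bookkeeping). The remaining ingredients --- the bottom-monomial reduction, the structure theorem for $\{321,231\}$-avoiders, and the dictionary between contained patterns and doubly occupied columns of the Rothe diagram --- are routine.
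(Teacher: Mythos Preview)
Your proof is correct and takes a different route from the paper's in both directions. For sufficiency, the paper analyzes the bottom pipe dream directly: a diagonal-noninterference statement (Lemma~\ref{321/231claim}) shows that for $321,231$-avoiding $w$ the rows of crossings slide independently under simple ladder moves, so row $i$ contributes the factor $h^i_{L_i}$. You instead establish the structural decomposition $w=w_{1,j_1}\oplus\cdots\oplus w_{1,j_r}$ and factor via Kohnert's rule; your flagged obstacle in fact dissolves here, because the strips $S_k$ occupy strictly increasing column ranges and strictly increasing starting rows, so performing the Kohnert moves on $S_1$, then $S_2$, and so on never violates the rightmost-box condition, while conversely any Kohnert move on $D(w)$ restricts column-wise to a valid Kohnert move on a single $S_k$. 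For necessity, the paper inducts on length using Lemma~\ref{lemma:induction} to reduce to a consecutive pattern and then derives a contradiction from Monk's rule by exhibiting two distinct covers of $ws_{i+1}$. Your argument is shorter and induction-free: a $321$ or $231$ pattern is exactly a doubly occupied column of $D(w)$, and since Kohnert moves preserve column multisets this forces $[x_1^{\ell(w)}]\mathfrak{S}_w=0$, contradicting the specialization $h_{\mathrm{code}(w)}\big|_{x_2=x_3=\cdots=0}=x_1^{\ell(w)}$. The paper's Monk-rule machinery has the virtue of treating the SEM and CHM theorems uniformly, whereas your Rothe-column argument is specific to the CHM case but more transparent and avoids any case analysis.
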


As a consequence, we notice that single-monomial, single-SEM, and single-CHM Schubert polynomials are all counted by nice enumerative sequences:

\begin{corollary}
    The number of $w \in S_n$ such that $\mathfrak{S}_w$ is a single monomial, standard elementary monomial, and complete homogeneous monomial, is (respectively), the Catalan number $C_n$, the Fibonacci number $F_{2n}$, and $2^{n-1}$. 
\end{corollary}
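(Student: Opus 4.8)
The plan is to invoke the three pattern-avoidance theorems above, which reduce the corollary to computing $|S_n(132)|$, $|S_n(312,1432)|$ and $|S_n(321,231)|$ for the monomial, SEM, and CHM cases respectively. The first is classical, $|S_n(132)|=C_n$; alternatively, by the first theorem these are exactly the $w$ whose Lehmer code is weakly decreasing, i.e.\ a partition contained in the staircase $(n-1,n-2,\dots,2,1)$, and such partitions are in bijection with Dyck paths of semilength $n$, hence counted by $C_n$. I would simply cite this.

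For the CHM case, note that $321$ and $231$ are precisely the two length-three patterns whose smallest letter is last; hence $w$ avoids both if and only if no position has two larger entries to its left, i.e.\ $t_j:=\#\{i<j:w_i>w_j\}\le 1$ for every $j$. Since $w\mapsto(t_1,\dots,t_n)$ is the standard bijection between $S_n$ and $\{0\}\times\{0,1\}\times\cdots\times\{0,\dots,n-1\}$, the avoiders correspond to $\{0\}\times\{0,1\}^{\,n-1}$, of which there are $2^{n-1}$.

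The SEM case is the substantive one, and I would handle it by a recursive decomposition at the position $p$ of the entry $1$. Write $w=A\,1\,B$, with $A$ in positions $1,\dots,p-1$ and $B$ in positions $p+1,\dots,n$. Since the entry $1$ can only play the role of the small letter in a $312$, one checks that $w$ avoids $312$ if and only if $A$ and $B$ each avoid $312$ and every entry of $A$ is smaller than every entry of $B$; in particular $A$ and $B$ are order-isomorphic to permutations of $[p-1]$ and $[n-p]$ with disjoint consecutive value sets. The companion statement I would need is for the second pattern: sorting the four positions of a putative $1432$ according to whether they lie in $A$, equal $p$, or lie in $B$ — the value structure of $A$ versus $B$ then pins down where the remaining positions can sit, and one uses that $1$, or indeed any entry of $A$, can serve as the small letter in front of a decreasing triple taken from $B$ — one obtains that $w$ contains $1432$ iff $A$ contains $1432$ or $B$ contains $321$. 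Combining the two statements, $w\in S_n(312,1432)$ iff $A\in S_{p-1}(312,1432)$ and $B\in S_{n-p}(312,321)$, which gives the convolution recurrence $a_n=\sum_{p=1}^{n}a_{p-1}d_{n-p}$, where $a_m=|S_m(312,1432)|$, $d_m=|S_m(312,321)|$, and $a_0=d_0=1$.

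It remains to evaluate the auxiliary numbers $d_m$, which I would do by the mirror of the CHM argument: $312$ and $321$ are the two length-three patterns whose largest letter is first, so $w$ avoids both iff its Lehmer code lies in $\{0,1\}^{\,m-1}\times\{0\}$; thus $d_0=1$, $d_m=2^{m-1}$ for $m\ge 1$, and $D(x):=\sum_m d_m x^m=\frac{1-x}{1-2x}$. The recurrence becomes $A(x)=1+xA(x)D(x)$, whence $A(x)=\frac{1}{1-xD(x)}=\frac{1-2x}{1-3x+x^{2}}$; its coefficients satisfy $a_n=3a_{n-1}-a_{n-2}$ and run $1,2,5,13,34,\dots$ for $n\ge1$, the alternate Fibonacci numbers, which is the sequence $F_{2n}$ in the indexing of the corollary. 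I expect the one genuine obstacle to be the $1432$ half of the decomposition lemma — excluding $1432$ patterns that straddle the entry $1$ without being captured by ``$A$ contains $1432$ or $B$ contains $321$'' — since that is exactly where the two forbidden patterns interact; everything afterward is bookkeeping and routine generating-function algebra.
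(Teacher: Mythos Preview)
Your argument is correct. The monomial and CHM counts are handled exactly as one would expect, and your CHM argument via the inversion table $t_j=\#\{i<j:w_i>w_j\}$ is clean and is essentially what the paper leaves implicit.

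For the SEM count your route is genuinely different from the paper's. The paper does not argue directly with the patterns $312$ and $1432$; instead it passes through Lemma~\ref{lehmerrules}, which recasts $S_n(312,1432)$ as the set of permutations whose Lehmer code changes by at most one at each step and has a decrease between any two increases. Those Lehmer codes are then read as restricted Motzkin meanders of length $n-1$ and counted by a binomial--Fibonacci identity summing to $F_{2n}$. Your approach bypasses the Lehmer-code lemma entirely: the decomposition at the position of the entry~$1$ gives the convolution $A(x)=1+xA(x)D(x)$ with $D(x)=(1-x)/(1-2x)$, whence $A(x)=(1-2x)/(1-3x+x^{2})$ and $a_n=3a_{n-1}-a_{n-2}$. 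The paper's method has the virtue of reusing the structural lemma that already drives the proof of Theorem~\ref{patternavoidanceSEMs} and of producing an explicit bijection with a lattice-path model; yours is the standard pattern-avoidance recursion, is self-contained, and is more portable to related avoidance classes.

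One small point of precision: your intermediate claim ``$w$ contains $1432$ iff $A$ contains $1432$ or $B$ contains $321$'' holds only under the hypothesis that every entry of $A$ lies below every entry of $B$ (equivalently, that $w$ already avoids $312$); without that, $w=261534$ is a counterexample. You clearly intend this, since you invoke ``the value structure of $A$ versus $B$'' before combining the two statements, but the conditional form should be stated explicitly. Once $A<B$ is in force the verification is immediate: in any $1432$ pattern not wholly inside $A$, the three large letters are forced into $B$ and form a $321$ there, and conversely any $321$ in $B$ is completed to a $1432$ in $w$ by prepending the entry~$1$. So the obstacle you anticipate does not materialise.
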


We will start with some brief background on standard elementary monomials and Schubert polynomials. Then, we prove Theorems \ref{patternavoidanceSEMs} and \ref{patternavoidanceCHMs}. Finally, we discuss further potential directions. 

\subsection{Acknowledgements}
We thank Alex Postnikov, Ilani Axelrod-Freed, Foster Tom, and Son Nguyen for interesting conversations, and especially Son Nguyen for sharing code that allowed for the computation of examples. 

\section{Background}

\subsection{Standard Elementary Monomials and Divided Difference Operators}

Schubert polynomials can be defined in terms of \emph{divided difference operators.} The divided difference operator $\partial_i$ acts by 
\[\partial_i(f) = \frac{f-s_i\cdot f}{x_i-x_{i+1}}\]
where $s_i$ is the simple transposition $(i, i+1)$ and $(s_i\cdot f)(x_1 \dots x_n) = f(x_1 \dots x_{i+1}, x_i \dots x_n)$. (So, if $f$ is symmetric in $x_i, x_{i+1}$, then $\partial_i(f) = 0$). Then, $\mathfrak{S}_w$ is defined recursively: for the longest element $w_0 \in S_n$, $\mathfrak{S}_{w_0} = x_1^{n-1}x_2^{n-2}\dots x_{n-1}$. Otherwise, we define $\partial_i(\mathfrak{S}_w) = \mathfrak{S}_{ws_i}$ if $ws_i$ covers $w$ in the weak Bruhat order (that is, $l(ws_i) = l(w)+1$), and $\partial_i(\mathfrak{S}_w) = 0$ if not. 

A \emph{descent} (respectively, ascent) of $w$ is a position $i$ such that $w_{i+1} < w_i$ (respectively, $w_{i+1} > w_i$). Notice that $\partial_i(\mathfrak{S}_w) = 0$ if and only if $i$ is an ascent of $w$. 

Both SEMs and CHMs interact nicely with divided difference operators: 

\begin{lemma}\label{divideddifferencelemma}
    We have $\partial_i(e^k_j) = 0$ if $i \neq k$, and $\partial_i(e^i_j) = e^{i-1}_{j-1}$. 

    Similarly, $\partial_i(h^k_j) = 0$ if $i \neq k$, and $\partial_i(h^i_j) = h^{i+1}_{j-1}$. 
\end{lemma}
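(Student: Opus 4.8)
The plan is to verify the lemma by a direct computation, separating the two vanishing assertions ($i \neq k$) from the two recursion assertions ($i = k$), and packaging the latter with generating functions so that the $e$ and $h$ cases are handled in parallel.

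First, suppose $i \neq k$. If $i > k$, then neither $x_i$ nor $x_{i+1}$ occurs in $e^k_j = e_j(x_1,\dots,x_k)$ or in $h^k_j = h_j(x_1,\dots,x_k)$, so $s_i$ fixes each of them and $\partial_i$ annihilates them. If $i < k$, then $x_i$ and $x_{i+1}$ both lie among $x_1,\dots,x_k$, and $e^k_j$, $h^k_j$ are symmetric in all of these variables, hence in particular symmetric in $x_i, x_{i+1}$; so again $s_i$ fixes them and $\partial_i(e^k_j) = \partial_i(h^k_j) = 0$.

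Now take $i = k$ and introduce the generating functions $E_k(t) = \sum_{j \geq 0} e^k_j\, t^j = \prod_{a=1}^k (1 + x_a t)$ and $H_k(t) = \sum_{j \geq 0} h^k_j\, t^j = \prod_{a=1}^k (1 - x_a t)^{-1}$, regarded as formal series in $t$ with coefficients in $\mathbb{Z}[x_1, x_2, \dots]$. Since $\partial_k$ is additive and fixes $t$, we may apply it to these series coefficientwise. The operator $s_k$ replaces the factor $(1 + x_k t)$ by $(1 + x_{k+1} t)$ in $E_k(t)$, and the factor $(1 - x_k t)^{-1}$ by $(1 - x_{k+1} t)^{-1}$ in $H_k(t)$; factoring out the remaining product and simplifying gives
\[ E_k(t) - s_k \cdot E_k(t) = (x_k - x_{k+1})\, t \prod_{a=1}^{k-1}(1 + x_a t), \qquad H_k(t) - s_k \cdot H_k(t) = (x_k - x_{k+1})\, t \prod_{a=1}^{k+1}(1 - x_a t)^{-1}. \]
Dividing by $x_k - x_{k+1}$ yields $\partial_k E_k(t) = t\, E_{k-1}(t)$ and $\partial_k H_k(t) = t\, H_{k+1}(t)$; extracting the coefficient of $t^j$ gives $\partial_k(e^k_j) = e^{k-1}_{j-1}$ and $\partial_k(h^k_j) = h^{k+1}_{j-1}$, as claimed. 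If one prefers to avoid generating functions, the same facts follow from the recursions $e^k_j = e^{k-1}_j + x_k\, e^{k-1}_{j-1}$ and $h^k_j = \sum_{a \geq 0} x_k^a\, h^{k-1}_{j-a}$: applying $s_k$, subtracting, and dividing by $x_k - x_{k+1}$ is immediate in the $e$ case, while the $h$ case additionally uses $\tfrac{x_k^a - x_{k+1}^a}{x_k - x_{k+1}} = h_{a-1}(x_k, x_{k+1})$ together with the convolution identity $\sum_{a + b = m} h_a(x_k, x_{k+1})\, h_b(x_1, \dots, x_{k-1}) = h_m(x_1, \dots, x_{k+1})$.

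There is no real obstacle here: the lemma is a routine computation. The only points deserving care are the justification that $\partial_k$ may be applied to the generating functions termwise (it is additive and does not involve $t$), and, in the recursion-based approach, the small amount of symmetric-function bookkeeping needed to finish the $h$ case.
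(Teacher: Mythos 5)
Your proof is correct. Note that the paper itself offers no argument for this lemma --- it is stated as a known fact about divided difference operators --- so there is nothing to compare against; your verification is the standard one and it checks out. The vanishing cases are handled properly (for $i>k$ neither $x_i$ nor $x_{i+1}$ appears, for $i<k$ the polynomials are symmetric in $x_i,x_{i+1}$), and the generating-function computation correctly produces the asymmetry in the statement: $\partial_k E_k(t)=tE_{k-1}(t)$ loses a variable because the factor $(1+x_kt)-(1+x_{k+1}t)$ cancels entirely, while $\partial_k H_k(t)=tH_{k+1}(t)$ gains one because $(1-x_kt)^{-1}-(1-x_{k+1}t)^{-1}$ leaves both $(1-x_kt)^{-1}$ and $(1-x_{k+1}t)^{-1}$ in the result. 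The alternative recursion-based finish, including the identity $\tfrac{x_k^a-x_{k+1}^a}{x_k-x_{k+1}}=h_{a-1}(x_k,x_{k+1})$ and the convolution step, is also sound.
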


The \emph{twisted Leibniz rule} for divided difference operators says that $\partial_i(pq) = \partial_i(p)q + s_i(p)\partial_i(q)$. From lemma \ref{divideddifferencelemma} and the twisted Leibniz rule, we can observe the following useful fact:

\begin{lemma}\label{lemma:induction}
    Suppose that $\mathfrak{S}_w$ is a single SEM, $e_{a_1 a_2 \dots a_n}$. Then, $a_i = 0$ if and only if $i$ is an ascent of $w$. Furthermore, if $i$ is a descent of $w$ and $i-1$ is an ascent, then $\partial_i(\mathfrak{S}_w) = \mathfrak{S}_{ws_i}$ is also a single SEM. 

    Similarly, suppose that $\mathfrak{S}_w$ is a single CHM, $h_{a_1 a_2 \dots a_n}$. Then, $a_i = 0$ if and only if $i$ is an ascent of $w$. Furthermore, if $i$ is a descent of $w$ and $i+1$ is an ascent, then $\partial_i(\mathfrak{S}_w) = \mathfrak{S}_{ws_i}$ is also a single CHM. 
\end{lemma}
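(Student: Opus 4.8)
The plan is to compute $\partial_i(\mathfrak{S}_w)$ directly, using the twisted Leibniz rule and Lemma~\ref{divideddifferencelemma}. Suppose $\mathfrak{S}_w = e_{\vec{a}}$ and factor it as $P\cdot e^i_{a_i}\cdot Q$, where $P = e^1_{a_1}\cdots e^{i-1}_{a_{i-1}}$ and $Q = e^{i+1}_{a_{i+1}}e^{i+2}_{a_{i+2}}\cdots$ (a finite product). Then $P$ involves only $x_1,\dots,x_{i-1}$, and every factor of $Q$ is symmetric in $x_i$ and $x_{i+1}$; in both cases $s_i$ acts as the identity, so $\partial_i(P) = \partial_i(Q) = 0$. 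Two applications of the twisted Leibniz rule therefore collapse everything to the single term in which $\partial_i$ hits the middle factor, and Lemma~\ref{divideddifferencelemma} gives
\[\partial_i(\mathfrak{S}_w) = P\cdot \partial_i(e^i_{a_i})\cdot Q = P\cdot e^{i-1}_{a_i-1}\cdot Q,\]
with the conventions $e^{i-1}_0 = 1$ and $e^{i-1}_{-1} = 0$ (the latter covering $a_i = 0$).

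For the first assertion: since Schubert polynomials are nonzero we must have $a_j\le j$ for every $j$ (otherwise $e^j_{a_j} = 0$), so when $a_i\ge 1$ the factor $e^{i-1}_{a_i-1}$ is a nonzero polynomial; as $P$ and $Q$ are nonzero and the polynomial ring is an integral domain, $\partial_i(\mathfrak{S}_w) = 0$ if and only if $a_i = 0$. Comparing with the fact noted above that $\partial_i(\mathfrak{S}_w) = 0$ precisely when $i$ is an ascent of $w$ gives the claimed equivalence.

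For the second assertion, assume $i$ is a descent of $w$ and $i-1$ is an ascent. Then $\partial_i(\mathfrak{S}_w) = \mathfrak{S}_{ws_i}$ by the defining recursion, and it equals $P\cdot e^{i-1}_{a_i-1}\cdot Q$. A priori this product carries two factors of superscript $i-1$, namely the last factor $e^{i-1}_{a_{i-1}}$ of $P$ and the newly produced $e^{i-1}_{a_i-1}$, which would violate the standard elementary shape. But by the first assertion $a_{i-1} = 0$ because $i-1$ is an ascent, so $e^{i-1}_{a_{i-1}} = 1$ and the collision disappears: $\mathfrak{S}_{ws_i} = e^1_{a_1}\cdots e^{i-2}_{a_{i-2}}\,e^{i-1}_{a_i-1}\,e^{i+1}_{a_{i+1}}\cdots$ is again a single SEM. (For $i = 1$ there is no superscript-$0$ factor to collide with and the claim is immediate.) The CHM statement follows by the same argument with $e$ replaced by $h$: writing $\mathfrak{S}_w = h_{\vec{a}} = P\cdot h^i_{a_i}\cdot Q$ and using $\partial_i(h^i_j) = h^{i+1}_{j-1}$ gives $\partial_i(\mathfrak{S}_w) = P\cdot h^{i+1}_{a_i-1}\cdot Q$; complete homogeneous polynomials in a positive number of variables never vanish, so once more $a_i = 0$ if and only if $i$ is an ascent, and the only possible superscript collision is now at $i+1$, between $h^{i+1}_{a_i-1}$ and $h^{i+1}_{a_{i+1}}$, which is removed exactly because the hypothesis forces $a_{i+1} = 0$.

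I expect no genuine obstacle here. The only points needing care are bookkeeping: confirming that each factor other than the $i$-th is annihilated by, or commutes past, $\partial_i$ — a matter of tracking which variables each $e^k_j$ or $h^k_j$ uses together with the twisted Leibniz rule — and checking that the single nontrivial action of $\partial_i$ produces no repeated superscript, which is precisely the role of the ascent hypothesis on the neighbor of $i$.
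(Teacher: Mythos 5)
Your proof is correct and is precisely the argument the paper intends: the paper derives this lemma from Lemma~\ref{divideddifferencelemma} together with the twisted Leibniz rule without writing out the details, and your computation (killing the factors $P$ and $Q$ on either side of $e^i_{a_i}$ or $h^i_{a_i}$, then using the ascent hypothesis on the neighboring index to avoid a repeated superscript) fills in exactly those details. No issues.
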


\subsection{Permutations and Pipe Dreams}

Pipe dreams are a combinatorial model for Schubert polynomials which will be useful for us. A pipe dream is a type of \emph{wiring diagram} for a permutation $w \in S_n$. Every box of the $[n] \times [n]$ grid contains either a cross or a pair of elbows (see Figure \ref{fig:pipedreams}). 

A pipe dream corresponds to $w$ if the wire that enters at the left of row $i$ exits at the top of column $w_i$. A pipe dream is called \emph{reduced} if no two wires cross more than once. Each pipe dream $D$ is assigned a weight $\text{wt}(D)$: a cross in row $i$ is assigned the weight $x_i$, and $\text{wt}(D)$ is the product of the weights of its crosses. The following theorem of Bergeron and Billey is well-known:

\begin{theorem}[\cite{pipedreams}]
    \[\mathfrak{S}_w = \sum_{D \in RC(w)} \text{wt}(D)\]
    where $RC(w)$ is the set of all reduced pipe dreams of $w$. 
\end{theorem}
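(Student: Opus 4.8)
The plan is to prove the formula by a downward induction on $\ell(w)$ that mirrors the recursive definition of $\mathfrak{S}_w$. Write $\mathfrak{R}_w := \sum_{D \in RC(w)} \text{wt}(D)$. Since $\mathfrak{S}_w$ is pinned down by $\mathfrak{S}_{w_0} = x_1^{n-1}x_2^{n-2}\cdots x_{n-1}$ together with the rule that $\partial_i \mathfrak{S}_w = \mathfrak{S}_{w s_i}$ whenever $i$ is a descent of $w$, it suffices to check that $\mathfrak R$ obeys the same base case and the same recursion. The base case is immediate: a reduced pipe dream of $w_0$ has exactly $\ell(w_0) = \binom{n}{2}$ crosses, and the staircase region $\{(i,j): i+j\le n\}$ in which they must sit also has $\binom{n}{2}$ boxes, so the only possibility is a cross in every box; this pipe dream is reduced, represents $w_0$, and has weight $x_1^{n-1}\cdots x_{n-1}$. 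So the whole theorem reduces to the identity $\partial_i \mathfrak{R}_w = \mathfrak{R}_{w s_i}$ for a descent $i$ of $w$.

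To prove this I would treat all the $\mathfrak R_w$ at once inside the nilCoxeter algebra $\mathcal{NC}_n$ — the $\mathbb Z$-algebra on generators $u_1,\dots,u_{n-1}$ subject to $u_i^2=0$, the braid relations, and $u_ju_k=u_ku_j$ for $|j-k|\ge 2$ — which has $\mathbb Z$-basis $\{u_w:w\in S_n\}$ with $u_vu_w=u_{vw}$ if $\ell(vw)=\ell(v)+\ell(w)$ and $0$ otherwise. Reading a pipe dream row by row from the top and right-to-left within each row, and declaring that box $(i,j)$ contributes the term $x_iu_{i+j-1}$ when it holds a cross and $1$ when it holds an elbow, one obtains directly that
\[\sum_{w\in S_n}\mathfrak R_w(x)\,u_w \;=\; \mathbf S(x)\;:=\;\mathbf a_1(x_1)\,\mathbf a_2(x_2)\cdots\mathbf a_{n-1}(x_{n-1}),\qquad \mathbf a_k(t):=(1+tu_{n-1})(1+tu_{n-2})\cdots(1+tu_k);\]
a non-reduced pipe dream contributes $0$ because its word is not reduced, which is exactly what the nilCoxeter relations encode. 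The recursion then follows from the single identity $\partial_i^{(x)}\mathbf S(x)=\mathbf S(x)\,u_i$, where $\partial_i$ acts on the polynomial coefficients: comparing the coefficient of $u_w$ on both sides yields $\partial_i\mathfrak R_w=\mathfrak R_{ws_i}$ when $i$ is a descent of $w$ and $\partial_i\mathfrak R_w=0$ when $i$ is an ascent, using that $u_wu_i$ equals $u_{ws_i}$ or $0$ accordingly.

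The remaining work, and the step I expect to be the main obstacle, is the identity $\partial_i^{(x)}\mathbf S(x)=\mathbf S(x)\,u_i$. Because $\partial_i$ only involves $x_i,x_{i+1}$ and $u_i$ commutes past every $\mathbf a_k$ with $k\ge i+2$, the twisted Leibniz rule peels off the factors $\mathbf a_k(x_k)$ with $k\notin\{i,i+1\}$ and reduces the claim to the finite identity $\partial_i\bigl(\mathbf a_i(x_i)\mathbf a_{i+1}(x_{i+1})\bigr)=\mathbf a_i(x_i)\mathbf a_{i+1}(x_{i+1})\,u_i$ in $\mathcal{NC}_n[x_i,x_{i+1}]$, which one checks by expanding both $\mathbf a$'s, grouping terms by their $u$-word, and applying $u_j^2=0$ and the braid relation — a bounded but genuinely computational step. (If one prefers to avoid $\mathcal{NC}_n$, the same recursion can be extracted combinatorially from chute and ladder moves: for a descent $i$, partition $RC(w)$ according to the pattern of rows $i$ and $i+1$, show that each block is generated from a single pipe dream of $ws_i$ with generating function a power of $x_i$ times a complete homogeneous polynomial in $x_i,x_{i+1}$, and use that $\partial_i$ telescopes the latter; this route is equivalent but heavier on bookkeeping.) Granting the recursion and the base case, induction on $\ell(w_0)-\ell(w)$ completes the proof.
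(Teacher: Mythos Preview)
The paper does not prove this theorem; it only states and cites it from Bergeron--Billey. So there is no ``paper's own proof'' to compare against. Your proposal is correct and is essentially the Fomin--Stanley nilCoxeter approach: package all $\mathfrak R_w$ into a single element of $\mathcal{NC}_n[x_1,\dots,x_{n-1}]$, check the base case at $w_0$, and verify $\partial_i\mathbf S(x)=\mathbf S(x)\,u_i$. Your extraction of the recursion from this identity is right (note that $u_wu_i=u_{ws_i}$ exactly when $i$ is an \emph{ascent} of $w$, so reindexing by $v=ws_i$ gives $\partial_i\mathfrak R_v=\mathfrak R_{vs_i}$ for each descent $i$ of $v$, as you state), and the reduction to the two-factor identity $\partial_i\bigl(\mathbf a_i(x_i)\mathbf a_{i+1}(x_{i+1})\bigr)=\mathbf a_i(x_i)\mathbf a_{i+1}(x_{i+1})\,u_i$ is valid since the factors $\mathbf a_k(x_k)$ with $k\notin\{i,i+1\}$ are constants for $\partial_i$ and $u_i$ commutes past each $\mathbf a_k$ with $k\ge i+2$.

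By way of context, the original Bergeron--Billey argument is purely combinatorial: it establishes $\partial_i\mathfrak R_w=\mathfrak R_{ws_i}$ directly via chute and ladder moves on pipe dreams, rather than via the nilCoxeter generating function. Your parenthetical alternative sketches exactly that route. The algebraic argument you give is cleaner and generalizes more readily (e.g., to double Schubert polynomials), at the cost of the two-row identity for $\mathbf a_i(x_i)\mathbf a_{i+1}(x_{i+1})$, which is a short but genuine computation using $u_j^2=0$ and the braid relation; the combinatorial argument trades that computation for explicit bookkeeping of tile configurations in rows $i$ and $i+1$.
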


For instance, Figure \ref{fig:pipedreams} shows all reduced pipe dreams for $w = 4132$, so $\mathfrak{S}_{4132} = x_1^3x_3 + x_1^3x_2$.

\begin{figure}[h!]\label{fig:pipedreams}
\centering
\includegraphics[scale=0.35]{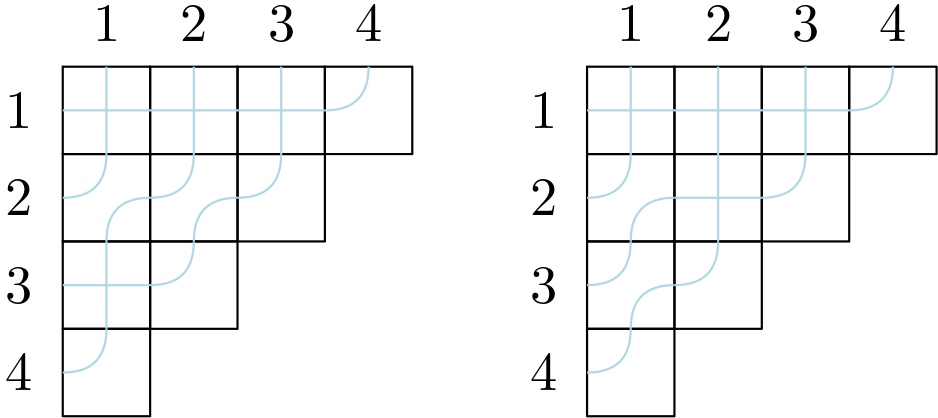}
\caption{The two (reduced) pipe dreams for $4132$. The pipe dream on the left is the \emph{bottom pipe dream} of $4132$ because its crossings are left-adjusted, and indeed, $L(4132) = (3, 0, 1, 0)$.}
\end{figure}

The \emph{Lehmer code} of $w$ will be a key definition for us:

\begin{definition}
    The \emph{Lehmer code} $L(w) = (L_1, L_2 \dots L_n)$ is given by \[L_i = |\{j > i| w(j) < w(i)\}|\] 
\end{definition}

Notice that $L_{i+1} < L_i$ if and only if $i$ is a descent of $w$ (that is, $w(i) < w(i+1)$). 

The \emph{bottom pipe dream} of $w$ is the unique left-justified pipe dream for $w$ where the number of crossings in row $i$ is $L(i)$. (Here, left-justified means that every row is a string of crossings followed by a string of elbows). For example, the bottom pipe dream for $w = 4132$ is the left pipe dream in Figure \ref{fig:pipedreams}. 

A \emph{ladder move} on reduced pipe dreams is a move as in Figure \ref{fig:laddermove}: a crossing with elbows to its right `climbs a ladder' of rows of crossings, ending with elbows to its left. We say a ladder move is of order $k$ if there are $k$ intermediate rows of crosses (so, if the moved crossing starts in row $i$, it ends in row $i-k-1$). 

For example, the ladder move in Figure \ref{fig:laddermove} is of order $4$. Ladder moves of order $0$ are called \textit{simple ladder moves}. One can see that ladder moves do not change the associated permutation or reducedness of a pipe dreams. Thus, it is natural to ask if reduced pipe dreams of a given permutation are connected by ladder moves. This was proved by Bergeron and Billey \cite{pipedreams}.

\begin{figure}[h!]
    \centering
    \includegraphics[scale = 0.3]{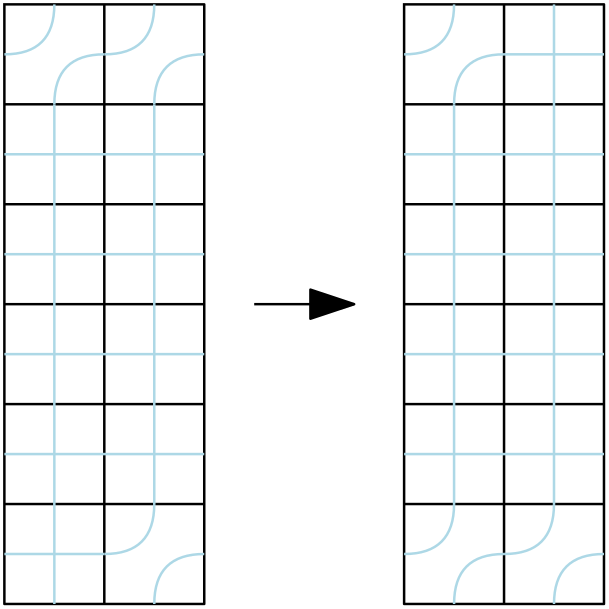}
    \caption{A ladder move of order $4$}
    \label{fig:laddermove}
\end{figure}

\begin{theorem}[{\cite[Theorem 3.7]{pipedreams}}]\label{thm:laddermove}
    Every reduced pipe dream for $w$ can be obtained from the bottom pipe dream for $w$ by a sequence of \emph{ladder moves}. 
\end{theorem}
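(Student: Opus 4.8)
The plan is to prove this by a monovariant (descent) argument. To each reduced pipe dream $D$ for $w$, with crossing set $\mathrm{Cr}(D) \subseteq \{(i,j) : i+j \le n\}$, I associate the weight $\Phi(D) = \sum_{(i,j) \in \mathrm{Cr}(D)} j$, the sum of the column indices of its crossings. Reading off the ladder move of Figure \ref{fig:laddermove}, such a move takes a single crossing from a position $(i_0,j_0)$ to the position $(i_0-k-1,\,j_0+1)$ and fixes every other crossing, so it raises $\Phi$ by exactly $1$; equivalently, the reverse of a ladder move lowers $\Phi$ by exactly $1$. Since $\Phi(D) \ge \ell(w)$ for every reduced pipe dream of $w$ (each of the $\ell(w)$ crossings lies in a column of index at least $1$), it suffices to show that (i) the bottom pipe dream $D_\bot$ admits no reverse ladder move, and (ii) every reduced pipe dream $D \ne D_\bot$ for $w$ admits at least one reverse ladder move. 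Granting these, I start from an arbitrary reduced pipe dream $D$ and repeatedly apply reverse ladder moves; each one stays among the reduced pipe dreams of $w$ (ladder moves change neither the permutation nor reducedness) and strictly decreases $\Phi$, so the process halts after finitely many steps at a reduced pipe dream admitting no reverse ladder move, which by (i) and (ii) must be $D_\bot$. Reversing this sequence of moves writes $D$ as the result of applying ladder moves to $D_\bot$.

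Part (i) is immediate: a reverse ladder move requires a crossing at some $(i,j)$ with $j \ge 2$ whose immediate left neighbor $(i,j-1)$ is an elbow, whereas every row of $D_\bot$ is a block of crossings followed by a block of elbows, so no such crossing exists. Part (ii) is the heart of the matter. Because $D_\bot$ is the \emph{unique} left-justified pipe dream for $w$, any $D \ne D_\bot$ has a \emph{defect}: a crossing at some $(i,j)$, $j \ge 2$, with $(i,j-1)$ an elbow. I would choose a defect with $j$ maximal, and among those with $i$ maximal, and then attempt to grow it into a reverse ladder move by reading columns $j-1$ and $j$ downward from row $i$: the goal is a run of consecutive rows that are crossings in both columns, capped by a row that is elbows in both columns. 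The extremal choice rules out certain misalignments for free (a crossing at $(r,j)$ with $r>i$ and $(r,j-1)$ an elbow would be a defect contradicting maximality of $i$), and reducedness — obtained by tracing the two pipes that pass through the region bounded by columns $j-1,\,j$ — is meant to rule out the remaining ones; if the chosen defect is not itself the top of a reverse ladder move, one argues that a nearby defect is. Carrying out this local case analysis cleanly, invoking reducedness exactly where it is needed, is the step I expect to be the main obstacle.

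An alternative route, which trades the delicate local analysis for bookkeeping, is induction on $n$ via the recursive structure of reduced pipe dreams. If $w(1)=1$, the first pipe is just the corner elbow at $(1,1)$ and crosses nothing, so deleting the first row and column identifies the reduced pipe dreams of $w$ with those of a permutation in $S_{n-1}$; if $w(1)>1$, then $(1,1)$ is a crossing in every reduced pipe dream of $w$, and peeling it off relates the reduced pipe dreams of $w$ to those of a smaller permutation. In either case $D_\bot$ restricts to the corresponding bottom pipe dream, and one applies the inductive hypothesis; the remaining work is to check that ladder moves are compatible with these reductions (in the $w(1)>1$ case, one must also account for ladder moves that interact with the first column).
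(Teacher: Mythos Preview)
The paper does not prove this theorem; it is quoted as background from Bergeron and Billey \cite{pipedreams} with no argument given, so there is nothing in the present paper to compare your attempt against.

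For what it is worth, your monovariant strategy via $\Phi(D)=\sum_{(i,j)\in\mathrm{Cr}(D)} j$ is the standard route and is essentially Bergeron and Billey's own argument. Part~(i) is immediate, as you say. For part~(ii), your extremal choice and your observations are on target: reading down columns $j-1,j$ from the chosen defect, an (elbow,\,cross) row is excluded by maximality of $i$, while a run of (cross,\,cross) rows terminated by a (cross,\,elbow) row forces the two pipes threading this two-column block to cross both at $(i,j)$ and at the bottom cross in column $j-1$, contradicting reducedness. Hence one must eventually reach an (elbow,\,elbow) row (at worst at the staircase boundary), yielding the desired reverse ladder move. So the ``main obstacle'' you flag is surmountable by exactly the pipe-tracing you anticipate; you have the correct plan and have simply stopped short of writing out the final case.
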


This fact is often useful when computing Schubert polynomials using the pipe dream model. 

To prove sufficiency in Theorems \ref{patternavoidanceSEMs} and \ref{patternavoidanceCHMs}, we will make use of pipe dreams. To prove necessity, we will use \ref{lemma:induction} and \textit{Monk's rule}. Monk's rule describes how to multiply $\mathfrak{S}_w$ with $e^k_1$:

\begin{theorem}
    For any $w \in S_n$ and $k$:

    \[e^k_1\mathfrak{S}_w = \sum_{(i_1, i_2)} \mathfrak{S}_{w(i_1, i_2)}\]
    where the sum is over transpositions $(i_1, i_2)$ such that
    \begin{enumerate}
        \item $i_1 \leq k < i_2$ and 
        \item $w(i_1, i_2)$ covers $w$ in the weak Bruhat order. (That is, $l(w(i_1, i_2)) = l(w)+1$). 
    \end{enumerate}
\end{theorem}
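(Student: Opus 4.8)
The plan is to prove Monk's rule purely algebraically, using only the recursive definition of $\mathfrak{S}_w$ through divided difference operators together with the twisted Leibniz rule, and to avoid any appeal to the geometry of flag varieties. The starting observation is that each $\mathfrak{S}_w$ is homogeneous of degree $\ell(w)$, so $e^k_1\,\mathfrak{S}_w$ is homogeneous of degree $\ell(w)+1$; since the Schubert polynomials form a $\mathbb{Z}$-basis of $\mathbb{Z}[x_1,x_2,\dots]$ respecting this grading, we may write $e^k_1\,\mathfrak{S}_w = \sum_v c_v\,\mathfrak{S}_v$ with all $v$ satisfying $\ell(v)=\ell(w)+1$, and the entire content of the theorem is the determination of the $c_v$. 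Writing $M_k(w)$ for the proposed right-hand side $\sum_{(i_1,i_2)}\mathfrak{S}_{w(i_1,i_2)}$, I would prove $e^k_1\,\mathfrak{S}_w = M_k(w)$ by downward induction on the length (equivalently, induction on codimension inside a sufficiently large $S_N$), the base case being a dominant permutation. For $w=w_0$ the polynomial $\mathfrak{S}_{w_0}=x_1^{N-1}x_2^{N-2}\cdots x_{N-1}$ is a single dominant monomial, and one checks directly that each monomial $x_a\cdot\mathfrak{S}_{w_0}$ appearing in $e^k_1\,\mathfrak{S}_{w_0}$ is again a dominant monomial, namely $\mathfrak{S}_{w_0(a,N+1)}$, so the base case reduces to matching these monomials against the transpositions permitted by the cover condition.

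The engine of the induction is the commutation behavior of $e^k_1$ with $\partial_i$. Since $e^k_1=x_1+\cdots+x_k$ is symmetric in $x_i$ and $x_{i+1}$ whenever $i\neq k$, the twisted Leibniz rule gives $\partial_i(e^k_1 f)=e^k_1\,\partial_i f$ for all $i\neq k$, while at the boundary $i=k$ we instead obtain $\partial_k(e^k_1 f)=f+s_k(e^k_1)\,\partial_k f$, where $s_k(e^k_1)=e^{k-1}_1+x_{k+1}$. The inductive step then takes a descent $i$ of $w$ and applies $\partial_i$ to the (assumed) identity $e^k_1\,\mathfrak{S}_w=M_k(w)$. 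When $i\neq k$ the left-hand side becomes $e^k_1\,\partial_i\mathfrak{S}_w=e^k_1\,\mathfrak{S}_{ws_i}$, which is exactly the left-hand side of the identity we want for $ws_i$; thus it remains to verify that $\partial_i M_k(w)=M_k(ws_i)$.

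This last equality is the combinatorial heart of the argument and the step I expect to be the main obstacle. It requires a careful bijective analysis: applying $\partial_i$ to a summand $\mathfrak{S}_{w(i_1,i_2)}$ yields $\mathfrak{S}_{w(i_1,i_2)s_i}$ precisely when $i$ is a descent of $w(i_1,i_2)$, and because right multiplication by $s_i$ conjugates and thereby relabels the transposition $(i_1,i_2)$, I would organize the terms so that those which do not directly match $M_k(ws_i)$ cancel in pairs, using the cover criterion that $\ell(w(i_1,i_2))=\ell(w)+1$ exactly when $w(i_1)<w(i_2)$ with no intermediate position $i_1<c<i_2$ satisfying $w(i_1)<w(c)<w(i_2)$. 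The genuinely delicate case is $i=k$: there the extra term $f=\mathfrak{S}_{ws_k}$ produced by the boundary identity must be accounted for, and the factor $s_k(e^k_1)=e^{k-1}_1+x_{k+1}$ couples the formulas for consecutive values of $k$. I would handle this by proving Monk's rule for all $k$ simultaneously, so that the contributions of $e^{k-1}_1$ and the boundary term can be reconciled against $M_{k-1}$ and $M_k$ within a single induction, completing the passage from $w$ to $ws_k$.
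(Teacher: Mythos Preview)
The paper does not prove this statement; Monk's rule is quoted in the background section as a classical result and then used as a black box in the later arguments. There is therefore no proof in the paper to compare your proposal against.

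For what it is worth, your outline is essentially the standard algebraic proof (the one in Macdonald's \emph{Notes on Schubert Polynomials}): downward induction from $w_0$ using the commutation of multiplication by $e^k_1$ with $\partial_i$ for $i\neq k$, the combinatorial core being the verification that $\partial_i M_k(w)=M_k(ws_i)$ via a case analysis of how covers of $w$ and covers of $ws_i$ match up under right multiplication by $s_i$. Your base case computation is correct: in $S_{N+1}$ the only covers of the embedded $w_0\in S_N$ with $i_1\le k<i_2$ are $w_0(a,N+1)$ for $1\le a\le k$, and each of these is dominant with Schubert polynomial $x_a\cdot\mathfrak{S}_{w_0}$. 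Your identification of $i=k$ as the delicate case is accurate. One common way to sidestep the simultaneous induction over $k$ that you propose is to observe that $e^k_1-e^{k-1}_1=x_k=\mathfrak{S}_{s_k}$, so it suffices to prove the transition formula for $x_k\cdot\mathfrak{S}_w$; since $x_k$ is symmetric in $x_i,x_{i+1}$ for every $i\notin\{k-1,k\}$, this leaves only two boundary indices to treat specially, and the two boundary contributions can be played off against each other.
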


\section{Proofs of Theorems \ref{patternavoidanceSEMs} and \ref{patternavoidanceCHMs}}

\subsection{Proof of Theorem \ref{patternavoidanceSEMs}}

To prove Theorem \ref{patternavoidanceSEMs}, we first prove the following convenient characterization of $1432$ and $312$-avoiding permutations:

\begin{lemma}\label{lehmerrules}
    A permutation $w$ avoids $1432$ and $312$ if and only if the Lehmer code $L(w) = (L_1 \dots L_n)$ satisfies the following three rules:
    \begin{enumerate}
        \item At every step, the Lehmer code decreases by at most one. That is, $L_i-L_{i+1}\leq 1$.
        \item Similarly, at every step, the Lehmer code increases by at most one. That is, $L_i-L_{i+1}\geq -1$.
        \item Between any two increases, there is at least one decrease. That is, if $L_i < L_{i+1}$ and $L_j < L_{j+1}$ for $i < j$, there is some $k$ between $i$ and $j$ such that $L_k > L_{k+1}$. 
    \end{enumerate}
\end{lemma}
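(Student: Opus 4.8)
The plan is to translate the two forbidden patterns into conditions on the Lehmer code by comparing adjacent entries, then handle the "gap" condition (3) by a separate argument. Recall the basic fact already noted in the excerpt: $L_i - L_{i+1} < 0$ exactly when $i$ is an ascent of $w$ (i.e. $w(i) < w(i+1)$), and $L_i - L_{i+1} \geq 0$ exactly when $i$ is a descent. So rule (3) is really a statement about the descent set: between any two ascents there is a descent, or equivalently, $w$ has no two consecutive ascents, i.e. there is no $i$ with $w(i) < w(i+1) < w(i+2)$ — which is exactly the statement that $w$ avoids... no wait, that would be $123$. Let me recheck: two consecutive ascents at $i$ and $i+1$ means $w(i) < w(i+1) < w(i+2)$, pattern $123$. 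But rule (3) allows non-adjacent ascents as long as a descent intervenes, so rule (3) is weaker than $123$-avoidance. I would prove: rule (3) fails $\iff$ there exist positions $p < q$, both ascents, with no descent between them $\iff$ $w$ restricted to $\{p, p+1, \dots, q+1\}$ is increasing (since every position strictly between $p$ and $q$ is then an ascent too), and in particular $w(p) < w(p+1) < w(q+1)$ with $q+1 \geq p+2$, giving an occurrence of the pattern... again $123$. Since $1432$ contains $123$? No, $1432$ does not contain $123$ (its longest increasing subsequence is $14$, length $2$). Hmm.

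So the correspondence must be more subtle, and the natural move is to prove the two directions of the biconditional directly rather than matching patterns one-to-one. For the forward direction (avoidance $\Rightarrow$ rules), I would prove the contrapositive three times: (i) if $L_i - L_{i+1} \geq 2$, exhibit a $312$ pattern; (ii) if $L_i - L_{i+1} \leq -2$, exhibit a $1432$ pattern; (iii) if rule (3) fails, exhibit one of the two patterns. For (i), $L_i - L_{i+1} \geq 2$ means that passing from column $i$ to column $i+1$ the number of smaller entries to the right drops by at least two; concretely $w(i)$ is large, $w(i+1)$ is smaller, and there are at least two indices $j > i+1$ with $w(i+1) < w(j) < w(i)$ — wait, I need to be careful about which values are counted. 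The clean statement: $L_i$ counts $\{j > i : w(j) < w(i)\}$ and $L_{i+1}$ counts $\{j > i+1 : w(j) < w(i+1)\}$; since $i$ is a descent, $w(i+1) < w(i)$ contributes $1$ to the difference, so $L_i - L_{i+1} \geq 2$ forces at least one index $j > i+1$ with $w(i+1) < w(j) < w(i)$ (in fact an extra one beyond what's forced), and then $w(i), w(i+1), w(j)$ forms the pattern $312$ — actually I should double-check: $w(i) > w(j) > w(i+1)$ in positions $i < i+1 < j$ is the pattern $3 1 2$? Values high-low-middle in that position order: yes, $312$. Good. For (ii), $L_i - L_{i+1} \leq -2$ with $i$ an ascent; I would look for the subtler configuration that forces $1432$, and I expect this is the step requiring the most care — finding the right three indices after $i$ to go with position $i$, using that the code jumped up by at least $2$, which should produce a value at position $i$ that is small, followed later by a descending run of length $\geq 3$ among larger values, i.e. $1\,4\,3\,2$. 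For (iii), failure of rule (3) gives two ascents $p < q$ with all intervening positions ascents, hence an increasing run $w(p) < \dots < w(q+1)$ of length $\geq 3$; combined with rules (1) and (2) already assumed (or argued simultaneously) this increasing run of length three plus the surrounding behavior of the code should force a $312$ or $1432$. The reverse direction (rules $\Rightarrow$ avoidance) I would also do contrapositively: assume $w$ contains $312$ or $1432$ and show one of the three rules fails, by locating the pattern occurrence and examining the code entries at the relevant columns.

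The main obstacle I anticipate is bookkeeping in the $1432$ case and in rule (3): the Lehmer code sees only adjacent columns, whereas $1432$ is a width-four pattern, so recovering a genuine pattern occurrence from a local code condition requires choosing witness positions carefully and may need an inductive or extremal choice (e.g. take $i$ minimal with a violation, or take the closest witnesses). A cleaner route, which I would try first, is to argue entirely in terms of descent/ascent combinatorics: rules (1)--(2) say every maximal ascending run has length at most two and every maximal descending run has length at most two; combined with rule (3), the descent set is tightly controlled, and one can then characterize such $w$ structurally and check pattern avoidance directly. If that structural description is clean, the lemma follows quickly; if not, I fall back on the three-contrapositives-each-way approach above.
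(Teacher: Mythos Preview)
Your contrapositive-in-each-direction strategy is the right shape and matches the paper (which carries out ``pattern $\Rightarrow$ rule violation'' and leaves the converse to the reader). Your argument for rule~(1) is correct: $L_i - L_{i+1} \geq 2$ forces a $312$ at positions $i, i+1, j$. But there are two genuine problems.

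First, the claim ``$L_i - L_{i+1} < 0$ exactly when $i$ is an ascent'' is false: at an ascent one only has $L_i \leq L_{i+1}$, with equality whenever no later value lies strictly between $w(i)$ and $w(i+1)$ (e.g.\ the identity has $L \equiv 0$). This invalidates your reformulation of rule~(3) as a pure statement about ascents and descents, and it also kills your ``cleaner route'': rules (1)--(2) do \emph{not} bound the length of ascending or descending runs --- witness $w = 321$ with $L = (2,1,0)$, or $w = 12\cdots n$ with $L = (0,\dots,0)$, both satisfying all three rules.

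Second, and more substantively, in case~(iii) you correctly extract an increasing block $w(p) < \cdots < w(q+1)$ from a rule~(3) violation, notice this is only a $123$, and then stop. The missing idea is that the \emph{strict} inequalities $L_p < L_{p+1}$ and $L_q < L_{q+1}$ each come with a witness: indices $m' > p+1$ and $m > q+1$ with $w(p) < w(m') < w(p+1)$ and $w(q) < w(m) < w(q+1)$. One checks $m' > q+1$ (otherwise $m'$ sits inside the increasing block with too small a value), so both $m, m'$ lie to the right of $q+1$ and $w(m') < w(m)$. Now split on their relative order: if $m' < m$, then $(q+1, m', m)$ is a $312$; if $m < m'$, then $(p, q+1, m, m')$ is a $1432$. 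The same witness trick dispatches case~(ii) as well: two indices $j_1 < j_2$ beyond $i+1$ with values strictly between $w(i)$ and $w(i+1)$ give a $312$ at $(i+1, j_1, j_2)$ if $w(j_1) < w(j_2)$, or a $1432$ at $(i, i+1, j_1, j_2)$ if $w(j_1) > w(j_2)$.
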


Throughout, we will refer to the three rules above as `the Lehmer rules.'

\begin{proof}
    In one direction, first assume that $w$ has a $1432$ pattern. By definition, we can find indices $i < j < k < l$ with $w_i, w_j, w_k, w_l$ in relative order $1432$. If there is any $i'$ between $i, j$ with $w_{i'} < w_i$, indices $i', j, k, l$ also give us a $1432$ pattern; consider that pattern instead. 
    
    Otherwise, we have $L_j \geq L(i)+2$, because $w_j > w_k, w_l$ and $w_i < w_k, w_l$, and for any other index $i' > i$, if $w_i > w_{i'}$, then $w_j > w_{i'}$ too. But, if $w$ satisfies rules 1 and 2 as above, then it is impossible for $L_j \geq L_i+2$ for $j > i$. 

    Now, assume that $w$ has a $312$ pattern, so that we can find indices $i < j < k$ with $w_i, w_j, w_k$ in relative order $312$. We can assume that $j = i+1$: suppose some $i'$ is between $i, j$. If $w_{i'} > w_k$, $i', j, k$ is also a $312$ pattern. If $<w_{i'}<w_k$, $i, i', k$ is a $312$ pattern. So, if $j \neq i+1$, we can always choose an $i, j$ closer together.
    
    Then, for a $312$ pattern $w_i, w_{i+1}, w_k$, we have $L_i \geq L_{i+1}+2$, since $w_i > w_{i+1}, w_k$ and $w_{i+1} < w_k$. So, $w$ does not satisfy rule 3. 

    So, we have shown that if $w$ satisfies rules 1, 2, and 3 as above, then $w$ cannot contain a $1432$ or $312$ pattern. The other direction is similar; we leave the details to the reader. 
\end{proof}

\begin{remark}
    The fact that permutations in $S_n$ avoiding $1432$ and $312$ are enumerated by the Fibonacci number $F_{2n}$ was proved by West \cite{patternavoidance} using generating functions and trees. Proposition \ref{lehmerrules} gives us another simple proof of this fact. If $L(w)$ is the Lehmer code of $w$, then consider the lattice path $P$ with vertices
    \[(0,0) = (0,L_n), (1, L_{n-1}) \dots (n-1, L_1)\]

    By Lehmer rules 1 and 2, $P$ is a \emph{Motzkin meander}: a lattice path starting at $(0, 0)$ taking steps $D = (-1, 1), U = (1, 1), H = (1, 0)$ that never crosses under the $x$-axis. $P$ satisfies the additional condition, coming from Lehmer rule 3, that between between any two $D$ steps there is at least one $U$ step. Any such path corresponds to the Lehmer code of a unique permutation $w$, since such a $P$ never increases above the line $y=x$. 

    To choose such a $P$, choose $k$ steps in the lattice path to be $H$-steps. Of the remaining steps, assign the first one to be $U$ (otherwise, the path goes below the $x$-axis). There are $n-k-1$ steps left, and we can choose any non-adjacent subset of the remaining steps to be $D$. The number of such subsets is $F_{n-k-1}$. Thus, the number of such $P$ is 
    \[\sum_{k=0}^n {n \choose k} F_{n-k-1} = F_{2n}\]
\end{remark}

Theorem \ref{patternavoidanceSEMs} will follow from the following three lemmas: Lemma \ref{lemma:semsufficient} shows that for $\mathfrak{S}_w$ to be an SEM, it is sufficient for $w$ to satisfy Lehmer rules $1, 2$, and $3$. Lemma \ref{lemma:semnecessary1} shows that Lehmer rule 1 is necessary. Finally, Lemma \ref{lemma:semnecessary2} shows that Lehmer rules 2 and 3 are necessary. 

\begin{lemma}\label{lemma:semsufficient}
    If $w$ satisfies the Lehmer rules, then $\mathfrak{S}_w$ is a standard elementary monomial. 
\end{lemma}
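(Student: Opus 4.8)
The plan is to compute $\mathfrak{S}_w$ directly from pipe dreams, as promised in the introduction. Start with the bottom pipe dream $D_{\mathrm{bot}}$ of $w$, whose crosses fill columns $1,\dots,L_i$ in each row $i$; by Theorem~\ref{thm:laddermove} every reduced pipe dream of $w$ is obtained from $D_{\mathrm{bot}}$ by ladder moves, so it suffices to track which pipe dreams these moves produce and sum their weights. The first step is to read the three Lehmer rules off the geometry of $D_{\mathrm{bot}}$: rules~1 and~2 say the right boundary of the crossed region rises and falls in unit steps, and rule~3 controls where its peaks sit relative to the flat stretches at each level.

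The heart of the argument is a decoupling claim. I expect the Lehmer rules to force the crossed region of $D_{\mathrm{bot}}$ to break into pieces --- the maximal decreasing runs of the code, together with the (rule-3-separated) ``bumps'' produced by its increases --- in such a way that a ladder move available inside one piece is unaffected by, and does not affect, the others. Granting this, $\mathfrak{S}_w$ factors as the product over pieces of the weight generating function of the pipe dreams reachable within each piece, and it remains to identify each factor. A rigid decreasing run admits no moves and contributes a monomial, which is the standard elementary monomial $e^a_a$ for the appropriate $a$ --- this recovers the dominant case, where the code's unit drops (rule~1) give $\prod_i x_i^{L_i}=\prod_{i\text{ a descent}}e^i_i$ --- while a bump contributes a genuine, non-monomial sum which one checks is a single elementary symmetric polynomial $e^a_b$. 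Finally one verifies that the superscripts produced by distinct pieces are distinct, so that the product is an honest SEM $e_{\vec a}$ rather than merely a product of $e^i_j$'s.

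The main obstacle is precisely this decoupling-and-identification step. Higher-order ladder moves can span many rows, so to rule out interference between pieces one needs a careful case analysis driven by the code: rule~1 bounds how far a cross at the end of a row can climb, and rule~3 keeps consecutive bumps from overlapping. Verifying that the only admissible moves are those internal to a single piece, that reducedness is preserved throughout, and that the within-piece weight sum is exactly the claimed $e^a_b$, is where the work lies; assembling the pieces into a standard elementary monomial is then formal. An alternative route is an induction on $\ell(w)$ that peels off one factor $e^a_b$ via the Pieri rule for $e^a_b\,\mathfrak{S}_v$; but showing the relevant Pieri expansion collapses to a single Schubert polynomial, and that the shorter permutation still satisfies the Lehmer rules, appears to require essentially the same combinatorial input, so I would expect the pipe dream argument to be the cleaner path.
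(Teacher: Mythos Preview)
Your proposal is correct and follows essentially the same approach as the paper: decompose the bottom pipe dream into a rigid dominant core (your ``decreasing runs'', the paper's sub-pipe-dream $P'$ for a dominant $w'$) plus movable ``bumps'' (the paper's \emph{outer columns}), argue that only simple ladder moves on the bumps are available and that distinct bumps never interact, and then read off each bump's contribution as a single $e^i_j$. The paper's write-up is only slightly more concrete in explicitly naming the dominant sub-permutation $w'$ and the outer crossings, but the decoupling-and-identification structure you outline is exactly what it does.
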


\begin{proof}
    If $w$ is a dominant permutation, then the statement is clear: recall that if $w$ is dominant, then $L(w)$ is nonincreasing and $\mathfrak{S}_w = x^{L(w)}$.  
    Then, we have 
    \[\mathfrak{S}_w = x^{L(w)} = \prod_{i \in D(w)} e_i^i\]
    where $D(w)$ is the descent set of $w$. 

    Otherwise, consider the bottom pipe dream $P$ of $w$.  There is a unique \emph{dominant} permutation $w'$ with bottom pipe dream $P'$ satisfying the conditions:
    \begin{enumerate}
        \item The Lehmer code of $w'$ decreases by at most $1$ at each step and
        \item We can obtain $P$ from $P'$ by adding at most one crossing to the end of each row of crossings in $P'$. No crossing is added to the end of a row $i$ where $P$ has more crossings in row $i$ than in row $i+1$. 
    \end{enumerate}

(That is, $P$ is almost the bottom pipe dream of a dominant permutation satisfying the Lehmer rules with a few extra crossings). Call the crossings we add to $P'$ \emph{outer crossings} of $P$. The outer crossings are grouped together in width one vertical columns, which we call \emph{outer columns}. See figure \ref{fig:pipedreamSEM} for an example. 

Theorem \ref{thm:laddermove} tells us that every reduced pipe dream for $w$ can be obtained from $P$ by applying a sequence of ladder moves. Here, the only ladder moves that can ever be performed are \emph{simple} ladder moves applied to the outer crossings. That is, if we imagine diagonal rails extending out from each outer crossing, the only pipe dreams for $w$ are obtained by sliding each outer crossing along the rails. 

Furthermore, suppose that in $P$, crossings in squares $(i, j), (i', j)$ with $i' < i$ lie in the same outer column. Then, the first crossing will always be in a row strictly higher than the row of the second crossing after any sequence of simple ladder moves (the second crossing can never pass next to the first under any valid ladder move). 

However, outer crossings that start in different outer columns do not interact with each other. Specifically, after any sequence of ladder moves, two crossings that started in different outer columns are never in adjacent squares.  

From these observations, we see that each outer column contributes a factor of $e^i_j$, where $i$ is the position of the lowest crossing in the outer column, and $j$ is the number of crossings in the outer column. Since different outer columns do not interact with each other, simply multiplying these factors, along with the SEM associated to the dominant $w'$, gives us the SEM $\mathfrak{S}_w$. Figure \ref{fig:rails} gives an example. 

\end{proof}

\begin{figure}\label{fig:pipedreamSEM}          \begin{ytableau}
            $+$ & $+$ & & &\\
            $+$ & $+$ & $\textcolor{red}{+}$ & &\\
            $+$ & $+$ & & &\\
            $+$ & & & & \\
            $+$ & $\textcolor{red}{+}$ & & & \\
            $+$ &$\textcolor{red}{+}$ & & & \\
            $+$ & & & & \\
            & & & & \\
        \end{ytableau}
    \caption{Above is the bottom pipe dream $P$ for $35427861$. (Cross signs denote squares with crossings; empty squares denote squares with elbows). We can construct $P$ by starting with the bottom pipe dream $P'$ of $34526781$ (in black) and adding the outer crossings (in red) to the outermost edge of $P'$. There are two outer columns of length 1 and 2.}

\end{figure}

\begin{figure}\label{fig:rails}
\includegraphics[scale=0.85]{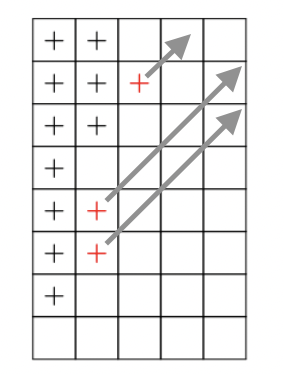} 

\caption{The grey arrows represent the possible places where we can slide each red outer crossing. In this example, the outer columns contribute factors of $e^2_1$ and $e^6_2$ each. Then, the dominant bottom pipe dream $P'$ contributes $e^3_3e^7_7$, so we get $\mathfrak{S}_{35427861} = e^2_1e^3_3e^6_2e_7^7$.}
\end{figure}

\begin{lemma}\label{lemma:semnecessary1}
    If $w$ breaks rule 1 in the statement of Lemma \ref{lehmerrules}, then $\mathfrak{S}_w$ cannot be a standard elementary monomial.  
\end{lemma}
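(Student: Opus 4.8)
The plan is to argue by contradiction: suppose $w$ breaks Rule 1, so there is a descent position where the Lehmer code drops by at least $2$, i.e. $L_i \geq L_{i+1} + 2$ for some $i$, and suppose nonetheless that $\mathfrak{S}_w = e_{\vec a} = e^1_{a_1}e^2_{a_2}\cdots$ is a single SEM. The idea is to repeatedly apply divided difference operators to strip $\mathfrak{S}_w$ down to a small permutation for which the SEM structure is visibly violated, using Lemma \ref{lemma:induction} to stay in the single-SEM world at each step. By Lemma \ref{lemma:induction}, $a_j = 0$ exactly at the ascents of $w$, so the exponent vector of the monomial $e_{\vec a}$, read off via $e^j_1 = x_1 + \cdots + x_j$, has a specific shape; in particular the degree of $\mathfrak{S}_w$ in $x_i$ relative to $x_{i+1}$ is controlled by $a_i, a_{i+1}, \dots$. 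The key quantitative point: in a single SEM $e_{\vec a}$, the exponent of $x_1$ is $\sum_j a_j$ while the exponent of $x_2$ is $\sum_j a_j - a_1$ minus (degree-$2$ corrections), and more generally the "gap" in $x_i$-degree between consecutive variables is at most... — and I would make this precise by examining which SEM can possibly equal a monomial with a given Lehmer-code exponent vector.

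More concretely, here is the route I would actually take. Since $\mathfrak{S}_w$ is a single monomial $x^{L(w)}$ only when $w$ is dominant (Theorem in the introduction), and an SEM $e_{\vec a}$ expands into a \emph{sum} of monomials unless each $a_i \le 1$ — wait, that's not quite it either, since $e^i_1$ is not a monomial. The correct observation is: $e_{\vec a}$ is a single monomial if and only if for each $i$ either $a_i = 0$, or $a_i = i$ (so $e^i_i = x_1\cdots x_i$ is a monomial), i.e. $e_{\vec a} = \prod_{i \in S} x_1\cdots x_i$ for some set $S$. So if $\mathfrak{S}_w$ is a single SEM, then since Schubert polynomials have \emph{positive} coefficients and $e_{\vec a}$ has monomials with sign $(-1)^{\text{something}}$ unless it's of this special monomial form, we conclude $\mathfrak{S}_w = \prod_{i \in S} x_1\cdots x_i$ for some $S$. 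But then $\mathfrak{S}_w$ is a single monomial, so $w$ is dominant, so $L(w)$ is nonincreasing — and $x^{L(w)} = \prod_{i\in S} x_1 \cdots x_i$ forces $L(w)$ to be a strictly-decreasing-by-at-most-one sequence down to $0$... This would actually prove the stronger statement that a single-SEM Schubert polynomial whose SEM has any $a_i \notin \{0,i\}$ cannot exist, which contradicts Theorem \ref{patternavoidanceSEMs} itself (e.g. $e^2_1 e^3_3 e^6_2 e^7_7$ from Figure \ref{fig:rails}). So that naive approach is wrong — an SEM like $e^2_1$ does contribute $x_1 + x_2$, and the product can still be a valid Schubert polynomial, which of course is then \emph{not} a single monomial.

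So the real argument must use the structure of $w$, not just positivity. The plan: using Lemma \ref{lemma:induction}, successively apply $\partial_j$ for descents $j$ of $w$ that are preceded by an ascent, peeling off elementary factors $e^j_1$ one index at a time, to reduce to a permutation $w'$ in a small symmetric group where the alleged SEM factorization can be checked directly against Monk's rule. Specifically, I would locate the descent $i$ with $L_i - L_{i+1} \ge 2$, arrange (by choosing $w$ minimal, or by a preliminary reduction via $\partial$'s applied away from positions $i, i+1$) that $i-1$ is an ascent and the relevant window of $w$ is as short as possible, then compute $e^i_1 \cdot \mathfrak{S}_{w'}$ via Monk's rule where $w' = ws_i$: Monk's rule produces a sum of at least two distinct Schubert polynomials (one from the transposition $(i, i+1)$ and at least one more from a transposition $(i_1, i_2)$ with $i_1 < i \le i_2$, which exists precisely because the drop $L_i - L_{i+1} \ge 2$ leaves room for another cover), whereas if $\mathfrak{S}_w = e^i_1 e^i_{j} \cdots$ were a single SEM with the factorization forced by Lemma \ref{lemma:induction}, applying $\partial_i$ would give $\mathfrak{S}_{w'} = e^{i-1}_{\,\cdot}\cdots$, a single SEM, and then $e^i_1 \mathfrak{S}_{w'}$ would have to equal $\mathfrak{S}_w$ plus the other Monk terms — forcing those other terms to vanish, a contradiction. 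The main obstacle is bookkeeping: making the reduction step clean so that after the $\partial$-peeling the descent with the big drop is still present and still flanked correctly, and verifying via Monk's rule that a strict drop of $\ge 2$ genuinely forces a second covering transposition in the relevant range. I expect that second point to follow from a direct combinatorial check on the values $w(i), w(i+1)$ and the positions between them, but it is the crux and deserves the most care.
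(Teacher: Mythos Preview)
Your proposal misses the paper's argument entirely and, more importantly, has a genuine gap. The paper's proof is a one-line leading-term comparison: the reverse-lex-maximal monomial of $\mathfrak{S}_w$ is $x^{L(w)}$, while the reverse-lex-maximal monomial of an SEM $e_{\vec a}$ is $\prod_i x_{i-a_i+1}\cdots x_i$, and in the latter the exponent of $x_j$ exceeds that of $x_{j+1}$ by at most one (only the factor $e^j_{a_j}$ can contribute $x_j$ without $x_{j+1}$). Equating exponents forces $L_j - L_{j+1}\le 1$. No divided differences, no Monk's rule.

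Your Monk's-rule strategy is essentially the one the paper uses for the \emph{other} two Lehmer rules (Lemma~\ref{lemma:semnecessary2}), but transplanting it here breaks down at the step you flag least: you need $e^i_1\cdot \mathfrak{S}_{ws_i}=\mathfrak{S}_w$ in order to contradict Monk's rule, and that identity only holds when $a_i=1$. Lemma~\ref{lemma:induction} tells you $a_i\neq 0$ at a descent, nothing more; after applying $\partial_i$ once you get $\mathfrak{S}_{ws_i}=e^{i-1}_{a_i-1}\cdot\prod_{j\neq i}e^j_{a_j}$, and $e^i_1$ times this is $e^i_1 e^{i-1}_{a_i-1}\cdot(\cdots)$, which is \emph{not} $e^i_{a_i}\cdot(\cdots)$ unless $a_i=1$. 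You cannot iterate $\partial_i$ to drive $a_i$ down, since after one application $i$ becomes an ascent. In fact the only way to pin down $a_i$ from the hypothesis $\mathfrak{S}_w=e_{\vec a}$ is precisely the leading-monomial computation you never do---and once you do it, Rule~1 falls out immediately and the rest of the argument is unnecessary. (This is also why, in the paper's proof of Lemma~\ref{lemma:semnecessary2}, the assertion ``$a_{i+1}=1$'' is available: it is read off from the leading-monomial identification already established in Lemma~\ref{lemma:semnecessary1}.)
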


\begin{proof}
Suppose that $\mathfrak{S}_w = e_{\vec{a}}$; then, consider the maximal monomials (in the reverse lexicographic ordering) of both sides. The maximal monomial of $\mathfrak{S}_w$ is just $x^{L(w)}$. Meanwhile, the maximal monomial of $e_{\vec{a}}$ is obtained by multiplying the maximal monomials of each $e^i_{a_i}$. For each $j$, there is at most one $i$ where the maximal monomial of $e^i_{a_i}$ has a factor of $x_j$ and not $x_{j+1}$, namely, $i=j$. Therefore, $L_j \leq L_{j+1}+1$, as desired. 
\end{proof}

\begin{lemma}\label{lemma:semnecessary2}
    If $w$ breaks rules $2$ or $3$ in the statement of Lemma \ref{lehmerrules}, then $\mathfrak{S}_w$ cannot be a standard elementary monomial. 
\end{lemma}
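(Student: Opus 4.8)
The plan is to assume $\mathfrak{S}_w=e_{\vec a}$ is a standard elementary monomial and deduce that the Lehmer code of $w$ obeys rules $2$ and $3$, rule $1$ being already handled by Lemma~\ref{lemma:semnecessary1}; both will be proved by induction on $l(w)$, with Lemma~\ref{lemma:induction} as the inductive engine. The preliminary step is to record exactly what the reduction in Lemma~\ref{lemma:induction} does to the data: if $j$ is a descent of $w$ with $j-1$ an ascent, then by the twisted Leibniz rule and Lemma~\ref{divideddifferencelemma}, $\partial_j(e_{\vec a})$ equals $e^1_{a_1}\cdots e^{j-1}_{a_{j-1}}e^{j-1}_{a_j-1}e^{j+1}_{a_{j+1}}\cdots$, and since $a_{j-1}=0$ (as $j-1$ is an ascent, by Lemma~\ref{lemma:induction}) the two $e^{j-1}$ factors merge, so $\mathfrak{S}_{ws_j}=e_{\vec b}$ with $b_{j-1}=a_j-1$, $b_j=0$, and $b_k=a_k$ otherwise; equivalently, $L(ws_j)$ is $L(w)$ with its $j$th entry decremented. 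I would also extract from the proof of Lemma~\ref{lemma:semnecessary1} the sharper identity $L_t-L_{t+1}=[a_t\ge 1]-|\{m>t:a_m=m-t\}|$ for a single SEM (it drops straight out of the reverse-lex leading-monomial computation); in particular, at an ascent $t$ one has $L_{t+1}-L_t=|\{m>t:a_m=m-t\}|$, and at a descent the code drops by exactly $1$.

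For rule $2$: suppose it fails and take $w$ of minimal length with this property, so that some ascent $t$ admits at least two ``witnesses'' $m_1<m_2$ with $a_{m_i}=m_i-t$. I would pass to $ws_r$ where $r$ is the left endpoint of a descent-run disjoint from $\{t,t+1\}$ — the run containing $m_2$ being the natural candidate. One checks $r\ge 2$ (such a run starts after the ascent $t$), so Lemma~\ref{lemma:induction} gives a strictly shorter single SEM $ws_r$; since the $L$-change is confined to position $r$, the ascent $t$ survives, and tracking the move $(a_{r-1},a_r)\mapsto(a_r-1,0)$ shows the number of witnesses for $t$ stays $\ge 2$ (the witness at $m_2$ may be destroyed, but one is created at $r-1$). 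This contradicts minimality.

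For rule $3$ (now with rules $1$ and $2$ in hand): suppose it fails, with increases at $i<j$ and no descent strictly between, and take $w$ minimal. Rules $1$–$2$ together with the ascent identity force $a_{i+1}=\cdots=a_j=0$, a unique descent $p>j$ with $a_p=p-i$, a unique descent $q>j$ with $a_q=q-j$, and $p\ne q$. I would reduce at $r=\max(p,q)\ge j+2$ (or at the start of its descent-run, which again lies past $j$); because the resulting $L$-change sits at a position $\ge j+2$, positions $\le j+1$ are untouched, so $i$ and $j$ remain increases and no new descent is created strictly between them, and $ws_r$ is a strictly shorter single SEM still violating rule $3$ — contradicting minimality.

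The step I expect to be the real obstacle is the bookkeeping in the ``no room'' boundary configurations, where the descents witnessing the violation all sit in a single run that begins immediately after the violating position, so that every legal reduction perturbs exactly the positions encoding the violation; these must be peeled off by hand. In that situation $w$ is highly constrained, and I expect that either a direct application of Monk's rule to $e^t_1\mathfrak{S}_w$ (comparing the resulting Schubert expansion against the single SEM $e^t_1 e_{\vec a}$) or an explicit check of the finitely many base permutations settles it.
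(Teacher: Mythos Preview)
Your overall architecture matches the paper's exactly: induct on $l(w)$, use Lemma~\ref{lemma:induction} to strip off a descent whose left neighbour is an ascent while preserving the violation, and handle the leftover ``boundary'' configuration with Monk's rule. Your sharper identity $L_t-L_{t+1}=[a_t\ge 1]-|\{m>t:a_m=m-t\}|$ is correct and pleasant, but the paper never needs it: since the reduction at $r$ alters $L$ only in position $r$, any violation recorded at positions $t,t+1$ (for rule~2) or $i,\dots,j+1$ (for rule~3) survives automatically once $r$ lies outside that window, so the witness bookkeeping is superfluous.

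The genuine gap is in the boundary case, which you correctly flag as the crux but do not resolve. Your proposed Monk computation, applied to $e^t_1\mathfrak{S}_w$, does not yield a contradiction: $e^t_1 e_{\vec a}$ is a single SEM, but you have no reason to believe it is a single \emph{Schubert} polynomial, so a multi-term Monk expansion of $e^t_1\mathfrak{S}_w$ proves nothing. Nor are there ``finitely many base permutations'' to check. The paper instead argues as follows. In the boundary configuration (for rule~2, say) one has $L=(\dots,L_i,L_i+k,L_i+k-1,\dots,L_i,\dots)$ with $i+1,\dots,i+k$ all descents. Set $w'=ws_{i+1}$; then $\mathfrak{S}_{w'}$ is again a single SEM, and because $i$ remains an ascent of $w'$ one gets $a_{i+1}-1=0$, i.e.\ $a_{i+1}=1$. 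Hence $\mathfrak{S}_w=e^{i+1}_1\cdot\mathfrak{S}_{w'}$ as polynomials. Now Monk's rule expands the right-hand side in the Schubert basis, and since the left-hand side is a single Schubert polynomial the expansion must have exactly one term; but both $(i+1,i+2)$ and $(i,i+k+1)$ give covers of $w'$, a contradiction. The rule~3 boundary is handled the same way with $(j+1,j+2)$ and $(i,j+3)$. The point you are missing is to multiply $\mathfrak{S}_{w'}$, not $\mathfrak{S}_w$, and at index $i+1$ rather than $i$, so that the product is forced to equal a known single Schubert polynomial.
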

\begin{proof}
    We will induct on the length of $w$. For contradiction, suppose that $L_{i+1}-L_i = k > 1$, but $\mathfrak{S}_w$ is a single SEM $e_{\vec{a}}$, in violation of Lehmer rule 2. By Lemma \ref{lemma:semnecessary1}, there must be at least $k$ positions after position $i$ where $L(w)$ decreases by $1$, and each such position is a descent of $w$. 

    First, suppose that $i+1, i+2, \dots i+k$ are not all descents of $w$. Then, we can find some $j > i+1$ such that $j$ is a descent of $w$ and $j-1$ is not. By Lemma \ref{lemma:induction}, $\partial_j(\mathfrak{S}_w) = \mathfrak{S}_{ws_j}$ is still a single SEM. However, the Lehmer code of $ws_j$ still violates rule 2, and induction on the length gives a contradiction. 

    Now, we may assume that $i+1, i+2 \dots i+k$ are all descents of $w$, so that $L(w)$ has the form 
    \[(\dots, L_i, L_i+k, L_i+k-1, L_i+k-2 \dots L_i+1, L_i \dots)\]

    Let $w' = ws_{i+1}$. Since $i+1$ is a descent of $w$ and $i$ is not, $\mathfrak{S}_{w'}$ is a single SEM by Lemma \ref{lemma:induction}. Moreover, if $\mathfrak{S}_w = e_{\vec{a}}$, we have $a_{i+1} = 1$. We must therefore have 
    \begin{equation}\label{equation1}
        e_1^{i+1}\mathfrak{S}_{w'} = \mathfrak{S}_w
    \end{equation}

    On the other hand, Monk's rule tells us that 
    \[e_1^{i+1} \mathfrak{S}_{w'} = \sum_{(i_1, i_2)} \mathfrak{S}_{w'(i_1, i_2)}\]
    where the sum is over $i_1 \leq i+1 < i_2$ such that $w'(i_1, i_2)$ covers $w'$ in the weak Bruhat order. 

    We claim that there are at least two summands on the left hand side of $e_1^{i+1}\mathfrak{S}$, contradiction equation \ref{equation1}. Indeed, we could take either $i_1 = i+1$ and $i_2 = i+2$, or $i_1 = i$ and $i_2 = i+k+1$. (To see why $w'(i, i+k+1)$ covers $w'$ in the weak Bruhat order, notice that $w_i < w_{i+k+1}$ since $l_i = l_{i+k+1}$. Furthermore, for all $i < j < i+k+1$, we have $w_j > w_{i+k+1}$ (since $j$ is a descent of $w$ for all such $j$), so that no $w_j$ is between $w_i$ and $w_{i+k+1}$). 

    So, we have shown that if $w$ violates Lehmer rule 2, then $\mathfrak{S}_w$ is not a single SEM. For rule 3, the proof is very similar. Suppose that $L_i < L_{i+1}$, $L_j < L_{j+1}$, and for all $i < k < j$, $L_k = L_{k+1}$. Similarly to before, we may assume that $j+1$ and $j+2$ are both descents of $w$, and we consider $w' = ws_{j+1}$. Again, $\mathfrak{S}_{w'}$ must be a single SEM, so that we must have 
    \begin{equation}
        e_1^{j+1}\mathfrak{S}_{w'} = \mathfrak{S}_w
    \end{equation}
    Here, when we apply Monk's rule, we could take either $(i_1, i_2) = (j+1, j+2)$ or $(i, j+3)$. By the same logic, $w$ cannot violate rule 3 if $\mathfrak{S}_w$ is a single SEM. 
\end{proof}

We guess that a stronger fact is true:

\begin{conjecture}
The SEM expansion of $\mathfrak{S}_w$ has all nonnegative coefficients only when $\mathfrak{S}_w$ is a single SEM.
\end{conjecture}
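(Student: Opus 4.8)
The plan is to prove the nontrivial direction: \emph{if $\mathfrak{S}_w$ is not a single SEM, then its SEM expansion has a strictly negative coefficient} (the other direction is immediate, since a single SEM is its own expansion, with coefficient $1$). By Theorem \ref{patternavoidanceSEMs} and Lemma \ref{lehmerrules} this is the same as: if $L(w)$ violates one of the three Lehmer rules, then $\mathfrak{S}_w$ has a negative SEM coefficient. I would prove this by upgrading the proofs of Lemmas \ref{lemma:semnecessary1} and \ref{lemma:semnecessary2} from the conclusion ``not a single SEM'' to the conclusion ``has a negative SEM coefficient.''

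A violation of rule $1$ is handled by the leading-monomial argument of Lemma \ref{lemma:semnecessary1}. Write $\mathfrak{S}_w = \sum_{\vec a} c_{\vec a} e_{\vec a}$ and suppose every $c_{\vec a}\ge 0$. Each $e_{\vec a}$ is monomial-positive, so there is no cancellation among reverse-lex leading terms, and the leading monomial $x^{L(w)}$ of $\mathfrak{S}_w$ must be the leading monomial of some $e_{\vec a}$ with $c_{\vec a}>0$. But the leading monomial of any SEM is a union of consecutive ``runs'' each ending at some $x_i$, so, exactly as in Lemma \ref{lemma:semnecessary1}, its exponent sequence decreases by at most $1$ at each step; hence $L(w)$ satisfies rule $1$, a contradiction.

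For violations of rules $2$ and $3$ I would induct on $\ell(w)$, following the architecture of Lemma \ref{lemma:semnecessary2}. The engine is a positivity-aware strengthening of the first half of Lemma \ref{lemma:induction}, which I will call the \emph{sub-lemma}: \emph{if $p$ is an ascent of $w$, then $a_p = 0$ for every $\vec a$ with $c_{\vec a}\ne 0$ in the SEM expansion of $\mathfrak{S}_w$.} I would prove the sub-lemma by first noting that a SEM $e_{\vec a}$ is symmetric in $x_p$ and $x_{p+1}$ precisely when $a_p = 0$ (the factor $e^p_{a_p}$ is the only one that is not $s_p$-invariant, and a product of an $s_p$-invariant polynomial with a non-$s_p$-invariant one is never $s_p$-invariant in $\mathbb{Z}[x_1,x_2,\dots]$), and then showing that the $s_p$-invariant SEMs span the subring $\mathbb{Z}[x_1,x_2,\dots]^{s_p}$; since $p$ being an ascent means $\partial_p\mathfrak{S}_w = 0$, i.e.\ $\mathfrak{S}_w$ is $s_p$-invariant, uniqueness of the SEM expansion then gives $a_p = 0$ throughout. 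Granting this, when $i$ is a descent and $i-1$ an ascent of $w$, the twisted Leibniz rule together with Lemma \ref{divideddifferencelemma} shows that $\partial_i$ sends each $e_{\vec a}$ occurring in $\mathfrak{S}_w$ to a single SEM $e_{\vec a'}$ (the otherwise problematic factor $e^{i-1}_{a_{i-1}}$ equals $e^{i-1}_0 = 1$ and drops out), and that $\vec a\mapsto\vec a'$ is injective, so $\partial_i$ carries the SEM expansion of $\mathfrak{S}_w$ onto that of $\mathfrak{S}_{ws_i}$ coefficient by coefficient; in particular the class of $w$ with nonnegative SEM expansion is closed under the reductions of Lemma \ref{lemma:induction}. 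Exactly as in Lemma \ref{lemma:semnecessary2}, a $w$ violating rule $2$ or $3$ either admits such a reduction to a strictly shorter permutation still violating that rule — and we finish by induction — or is forced into the ``staircase'' shape, where repeated reductions bring us to the minimal staircase and applying $\partial_i$ at the appropriate descent (whose predecessor is an ascent) sends the assumed-nonnegative $\mathfrak{S}_w$ onto a single SEM; injectivity of $\vec a\mapsto\vec a'$ then forces $\mathfrak{S}_w$ itself to be a single SEM, contradicting Lemma \ref{lemma:semnecessary2}. (Alternatively the staircase base case can be closed with Monk's rule, which writes a single SEM $e^{i+1}_1\mathfrak{S}_{ws_{i+1}}$ as $\mathfrak{S}_w$ plus at least one further Schubert polynomial — impossible if every summand has a nonnegative SEM expansion.)

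I expect the main obstacle to be the second half of the sub-lemma: that the $s_p$-invariant SEMs span $\mathbb{Z}[x_1,x_2,\dots]^{s_p}$, i.e.\ that the SEM basis is compatible with the filtration by $s_p$-invariants. This is easy to verify in low degrees, but the naive triangularity argument fails because distinct SEMs can share a reverse-lex leading monomial (for instance $e^2_2$ and $e^1_1e^2_1$ both have leading monomial $x_1x_2$); a proof will likely need a finer term order, an explicit analysis of the SEM-to-monomial transition, or an inductive use of the divided-difference operators. A secondary, more routine, difficulty is the bookkeeping inherited from Lemma \ref{lemma:semnecessary2}: choosing the right descent at which to reduce, checking that the reduced permutation still violates the relevant Lehmer rule or else lands on a single-SEM permutation, and confirming that the base case yields a negative coefficient of $\mathfrak{S}_w$ itself rather than of a permutation appearing alongside it. The analogous statement for CHMs should follow from the same argument using the second halves of Lemmas \ref{divideddifferencelemma} and \ref{lemma:induction}.
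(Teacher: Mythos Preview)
The paper does not prove this statement: it is stated as a conjecture and left open. The paper only remarks that the $312$-avoiding half (your rule~$1$ case) follows from the leading-monomial argument of Lemma~\ref{lemma:semnecessary1}, exactly as you argue; the $1432$-avoiding half is explicitly left as the open content of the conjecture. So there is no paper proof to compare against, and your proposal should be read as an attempted resolution of an open problem.

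Your attack on rules~$2$ and~$3$ has a genuine gap: the sub-lemma is false, not merely hard. The $s_p$-invariant SEMs do \emph{not} span $\mathbb{Z}[x_1,x_2,\dots]^{s_p}$. Take $p=2$ and $f=x_1^2$. This is $s_2$-invariant (it involves neither $x_2$ nor $x_3$), yet its unique SEM expansion is
\[
x_1^2 \;=\; e^1_1e^2_1 - e^2_2,
\]
and both terms have $a_2>0$. In Schubert language this is $\mathfrak{S}_{312}$, for which position $2$ is an ascent but no SEM in the expansion has $a_2=0$. Consequently the step ``$i-1$ an ascent $\Rightarrow a_{i-1}=0$ in every term'' fails, and with it the claim that $\partial_i$ carries the SEM expansion of $\mathfrak{S}_w$ term-by-term onto that of $\mathfrak{S}_{ws_i}$: for general terms $\partial_i(e_{\vec a})$ acquires a factor $e^{i-1}_{a_{i-1}}e^{i-1}_{a_i-1}$, which is neither a single SEM nor sign-controlled, so nonnegativity need not propagate.

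Your Monk's-rule fallback does not rescue the base case either. Writing $e^{i+1}_1\mathfrak{S}_{ws_{i+1}}=\mathfrak{S}_w+\sum \mathfrak{S}_{w''}$ equates a single SEM with a sum of Schubert polynomials, but the extra summands $\mathfrak{S}_{w''}$ have no reason to be SEM-nonnegative; their negative SEM coefficients can cancel against positive ones in $\mathfrak{S}_w$, so no contradiction follows from assuming $\mathfrak{S}_w$ alone is SEM-nonnegative. In short, your rule~$1$ argument is fine and matches the paper's remark, but the inductive machinery for rules~$2$ and~$3$ rests on a claim with an explicit degree-$2$ counterexample, and the conjecture remains open.
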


It is automatic that if $\mathfrak{S}_w$ is a nonnegative sum of SEMs, then $w$ is $312$-avoiding by the same argument as Lemma \ref{lemma:semnecessary1}. So, it only remains to show that $w$ must be $1432$-avoiding. We remark that this conjecture is the exact opposite of what happens for usual monomials, since Schubert polynomials are notably \emph{always} monomial-positive. 

\subsection{Proof of Theorem \ref{patternavoidanceCHMs}}

Now, we prove Theorem \ref{patternavoidanceCHMs}. 

\begin{lemma}\label{321/231claim}
Suppose that $w$ avoids patterns $321$ and $231$. Then, the bottom pipe dream $P$ of $w$ satisfies the following property: choose the leftmost crossing of any row of $P$ and draw a diagonal line upwards and rightwards from that crossing. This line never intersects a square containing a crossing in $P$, nor does any square immediately to the left of this line contain a crossing in $P$. 
\end{lemma}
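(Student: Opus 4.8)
The plan is to translate the pattern-avoidance hypothesis into combinatorial conditions on the Lehmer code $L(w) = (L_1, \dots, L_n)$, and then read off the geometry of the bottom pipe dream $P$ directly. First I would recall that the leftmost crossing of row $i$ of $P$ sits in square $(i, 1)$ if $L_i > 0$, and that row $i$ consists of crossings in columns $1, \dots, L_i$ followed by elbows; so the diagonal line emanating up-and-right from the leftmost crossing of row $i$ passes through squares $(i-1, 2), (i-2, 3), \dots$, i.e. through square $(i-t, t+1)$ for $t \geq 0$. The claim that this line never hits a crossing is exactly the statement $L_{i-t} \leq t$ for all $t \geq 1$ (equivalently $L_{i-t} \le t$ whenever $1 \le t \le i-1$), and the claim about the square immediately left of the line — square $(i-t, t)$ — is the statement $L_{i-t} \leq t-1 < t$, which is in fact implied by applying the first condition to row $i-1$'s line, or proved in the same stroke. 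So the whole lemma reduces to: \emph{if $w$ avoids $321$ and $231$, then for every $i$ and every $t \geq 1$ with $i - t \geq 1$, we have $L_{i-t} \leq t$ whenever row $i$ has a crossing (i.e. $L_i \ge 1$).}

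The key step is therefore the Lehmer-code characterization of $\{321, 231\}$-avoidance, in the same spirit as Lemma~\ref{lehmerrules}. I would prove: $w$ avoids $321$ and $231$ if and only if the Lehmer code satisfies $L_j \le 1$ whenever $j$ is not the last descent-free-from-below position — more precisely, I expect the right statement to be that $L_i \geq 2$ forces $L_{i+1} = 0$, or some equally rigid local condition. Let me think about what $321$ and $231$ avoidance each contribute. A descent $i$ with $L_i \ge 2$ means $w_i$ is larger than at least two later entries; if two of those later entries $w_j, w_k$ (with $i < j < k$) are themselves in decreasing order we get a $321$, and if they are in increasing order we get... $w_i > w_j$, $w_i > w_k$, $w_j < w_k$, that is pattern $312$, not forbidden. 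Hmm — so I need to be more careful: avoiding $321$ says the two later small entries can't be decreasing, and avoiding $231$ says we can't have $w_i < $ something later that then drops below $w_i$. The cleanest route: show $\{321,231\}$-avoidance is equivalent to $w$ having the form where the non-fixed structure forces $L_i \in \{0,1\}$ for all $i$ except that a value $L_i \ge 2$ can occur only at an "initial" descent, with all subsequent $L_{i+1},\dots$ forced to be $0$ until the code resets. I would pin this down by a direct extremal argument (choose a minimal witnessing occurrence and push indices together, exactly as in the proof of Lemma~\ref{lehmerrules}).

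Once the Lehmer-code condition is in hand, the diagonal-line statement follows by a short case analysis: if $L_i \le 1$, the line from $(i,1)$ immediately exits into a region of small codes and one checks $L_{i-t} \le t$ using rule-type inequalities (a large $L_{i-t}$ with $t$ small would itself create a forbidden pattern together with $w_i$ and a later small entry). If $L_i \ge 2$, then by the structural result the rows just above must have been "climbing" in a controlled way, and again $L_{i-t} \le t$. For the "square immediately left of the line" part, note that square $(i-t, t)$ containing a crossing means $L_{i-t} \ge t$; combined with $L_{i-t} \le t$ from the first part this would force equality $L_{i-t} = t$, and I'd rule that out by observing it makes $(i-t,t)$ the \emph{rightmost} crossing of its row sitting diagonally adjacent to the line, which (tracing the pipes) produces a crossing of two wires that already crossed, contradicting reducedness of the bottom pipe dream — or, more simply, directly contradicts the Lehmer-code bound by a strict-inequality refinement.

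\textbf{Main obstacle.} The delicate part is nailing the exact Lehmer-code characterization of $\{321,231\}$-avoidance and making sure the bound is $L_{i-t} \le t$ with the correct strictness on both the diagonal squares and the squares just to their left; the pattern-pushing arguments are routine but the bookkeeping of which of $321$ versus $231$ rules out which configuration of later entries needs to be done carefully, since (unlike the $312/1432$ case) here one forbidden pattern controls decreasing tails and the other controls a rise-then-fall, and only their conjunction yields the clean code condition.
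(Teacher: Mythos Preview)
Your translation of the geometry into Lehmer-code inequalities is correct: the two non-crossing conditions together say $L_{i-t}<t$ for every $t\ge 1$ whenever $L_i>0$. The gap is the Lehmer-code characterization of $\{321,231\}$-avoidance that you are hunting for. The local rules you guess are insufficient: for instance $w=3142$ has $L=(2,0,1,0)$, which satisfies both ``$L_i\ge 2\Rightarrow L_{i+1}=0$'' and ``no two consecutive $L_i$ positive,'' yet $342$ is a $231$-pattern, and indeed the diagonal from row $3$ sits immediately to the right of the crossing at $(1,2)$. The clean characterization actually lives in the co-code $L'_k=\#\{j<k:w_j>w_k\}$, namely $L'_k\le 1$ for all $k$ (two larger entries to the left of position $k$ form one of the two forbidden patterns); but translating this back to the ordinary code $L$ gives exactly the inequality $L_j<i-j$ for $j<i$ with $L_i>0$, which is the lemma itself, so routing through a separate characterization is circular. (Also, your remark that the left-square bound follows from the on-line bound for row $i-1$ fails: under the hypothesis row $i-1$ will have $L_{i-1}=0$ and hence no diagonal at all.)

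The paper avoids this detour by exhibiting a forbidden pattern directly. Take the \emph{largest} row $i$ whose diagonal meets, or sits just right of, a crossing in some row $j<i$. Maximality forces $L_{i+1}=0$ (otherwise the crossing at $(i,1)$ lies just left of row $i+1$'s diagonal, contradicting the choice of $i$), so $w_i>w_{i+1}$. If $w_j<w_{i+1}$, then $w_i,w_{i+1}>w_j$, and since $L_j\ge i-j$ by the violation while only $i-j-1$ positions lie strictly between $j$ and $i$, at least one of the $L_j$ later entries smaller than $w_j$ sits beyond position $i+1$, forcing $L_{i+1}>0$, a contradiction. Hence $w_j>w_{i+1}$, and positions $j,i,i+1$ carry a $231$ or $321$. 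That extremal argument is the entire proof --- no prior classification of Lehmer codes is needed.
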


See Figure \ref{fig:chmrails} for an example. 

\begin{figure}  \label{fig:chmrails} \includegraphics[scale=0.8]{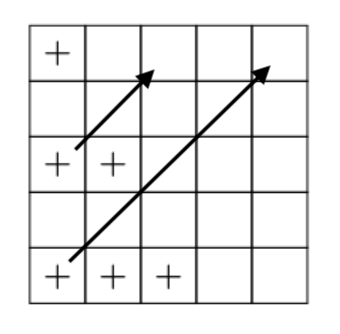}
\caption{Above is the bottom pipe dream for $1427356$ (cross signs denote crossings, and empty boxes denote non-crossings). If we draw a diagonal line out from the first crossing in row 3, then this diagonal line never intersects or is directly to the right of another crossing. However, the diagonal line emitting from the first box in row 5 enters the square directly to the right of the last crossing of row 3. Indeed, the subsequence $473$ is a $231$ pattern. 
}
\end{figure}

\begin{proof} For contradiction, let $i$ the largest row index such that the line emanating from the leftmost crossing of row $i$ intersects or is directly to the right of a crossing in row $j$. Since $L(i) > 0$ and $i$ was the largest such row, we must have $L(i+1) = 0$, and thus, since $L(i) > L(i+1)$, we have $w(i) > w(i+1)$. But then, we claim that $w(j) > w(i+1)$. Either $w(j) > w(i)$, in which this follows by transitivity, or $w(j) < w(i+1)$, in which case, $L(i+1) \geq L(j)+(i-j)+1 > 0$. Either way, we have either a $231$ or a $321$ pattern in $w$ given by the indices $j, i, i+1$. So, we have proved the claim. 
\end{proof}

Lemma \ref{321/231claim} allows us to prove that $321$ and $231$-avoidance are sufficient:

\begin{lemma}
    If $w$ avoids $231$ and $321$, then $\mathfrak{S}_w$ is a single CHM. In particular, $\mathfrak{S}_w = h_{L(w)}$. 
\end{lemma}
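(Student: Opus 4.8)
The plan is to use the pipe dream model together with Lemma \ref{321/231claim}, mirroring the strategy of Lemma \ref{lemma:semsufficient} but now tracking rows rather than columns of outer crossings. By Theorem \ref{thm:laddermove}, every reduced pipe dream for $w$ is obtained from the bottom pipe dream $P$ by a sequence of ladder moves. First I would analyze which ladder moves are available on $P$: a ladder move takes a crossing with an elbow to its right, sitting atop a vertical run of crossings, and slides it up and to the left. The key point is that Lemma \ref{321/231claim} says the diagonal line emanating up-right from the leftmost crossing of each row is free of crossings, both on the line and immediately to its left. I claim this forces every admissible ladder move to be a \emph{simple} ladder move (order $0$), and moreover that the crossings that can ever move are exactly the rightmost crossings of each row, each sliding along its own diagonal rail up to — but not past — the point where the line from the row below would obstruct it.

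The second step is to organize these rails by \emph{rows} rather than columns. In each row $i$ of $P$ there are $L_i$ crossings occupying columns $1, \dots, L_i$; the crossing in column $L_i$ is the one that can slide, and under a simple ladder move it moves up one row and one column to the right, repeatedly, so long as the square it would vacate is immediately followed by an elbow and the square above-right is an elbow with an elbow to its right. Using Lemma \ref{321/231claim} applied to row $i$ (so the line out of column $1$ of row $i$ is clear) one checks the rightmost crossing of row $i$ can slide freely through all the rows $j < i$ with $L_j < L_i$ — precisely the rows whose own crossings do not block column $L_i$'s rail — and that the mobile crossings coming from different rows never interact (never become adjacent after any sequence of moves), again by the clearance statement of the lemma. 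Each row $i$ with $L_i > 0$ thus contributes, independently, the sum over all ways of placing one of its crossings along the rail, and one identifies this sum as $h^i_{L_i}$: the variables available are $x_i, x_{i-1}, \dots$ down to the highest row the rail reaches, but because the mobile crossing can be placed repeatedly and the remaining $L_i - 1$ crossings of the row stay put contributing $x_i^{L_i - 1}$, and because repeated simple ladder moves on a run produce exactly the monomials of a complete homogeneous symmetric polynomial — this is the content of Lemma \ref{divideddifferencelemma}'s formula $\partial_i(h^i_j) = h^{i+1}_{j-1}$ read in reverse — the total contribution is $h^i_{L_i}$ in the appropriate block of variables. Multiplying the independent contributions over all rows gives $\mathfrak{S}_w = \prod_i h^i_{L_i} = h_{L(w)}$.

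I would double-check the base case and the reduction: if $w$ is dominant (Lehmer code nonincreasing) then $P$ has no mobile crossings at all, $\mathfrak{S}_w = x^{L(w)} = \prod_{i \in D(w)} h^i_i$ since a nonincreasing code drops by more than one only is disallowed — wait, here one only needs that for dominant $w$, $x^{L(w)}$ already equals $\prod h^i_{L_i}$ when the code is a sum of staircases, which holds because $h^i_i$ evaluated with only its own block contributing reduces correctly; in the general avoiding case the rails take care of the rest. Alternatively, and perhaps more cleanly, I would run the induction of Lemma \ref{lemma:induction}: pick a descent $i$ of $w$ with $i+1$ an ascent, apply $\partial_i$, use $\partial_i(h^i_j) = h^{i+1}_{j-1}$ from Lemma \ref{divideddifferencelemma} together with the twisted Leibniz rule to see $\mathfrak{S}_{ws_i} = h_{L(ws_i)}$, and check $ws_i$ still avoids $321$ and $231$ — this last pattern-avoidance persistence is routine.

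\textbf{Main obstacle.} The delicate step is the clearance/non-interaction analysis: rigorously showing that Lemma \ref{321/231claim} implies (i) only simple ladder moves on rightmost-of-row crossings are ever possible, and (ii) mobile crossings from distinct rows never obstruct one another under \emph{any} sequence of moves, so that the contributions genuinely multiply. This is the pipe-dream bookkeeping analogue of the ``different outer columns do not interact'' paragraph in Lemma \ref{lemma:semsufficient}, and getting the geometry exactly right — in particular that the rail of row $i$ stops precisely where it should so that the variable set is $\{x_i, x_{i-1}, \dots\}$ down to the correct row and no overcounting occurs — is where the real work lies.
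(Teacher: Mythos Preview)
Your overall strategy matches the paper's: use Theorem \ref{thm:laddermove}, invoke Lemma \ref{321/231claim} to see that only simple ladder moves occur and that crossings from different rows never interact, and then factor $\mathfrak{S}_w$ row by row. That part is right and is exactly what the paper does.

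The genuine error is in your within-row analysis. You assert that ``the crossings that can ever move are exactly the rightmost crossings of each row'' and that ``the remaining $L_i-1$ crossings of the row stay put contributing $x_i^{L_i-1}$.'' This is false. Once the rightmost crossing of row $i$ slides up, the next one is free to slide, and so on; \emph{every} crossing of row $i$ may move along its own anti-diagonal. The only constraint is that two crossings from the same row can never slide past each other (the $j$th crossing must stay in a row at least as low as the $(j{+}1)$st). Concretely, for $w=1423$ with $L=(0,2,0,0)$ your description would give $x_2(x_1+x_2)$, missing the $x_1^2$ term, whereas the correct answer is $h^2_2=x_1^2+x_1x_2+x_2^2$. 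The paper states this as ``two crossings in the same row of the bottom pipe dream can never slide past each other,'' and that non-passing constraint is precisely what makes the set of row-$i$ configurations biject with weakly ordered tuples $i\ge r_1\ge r_2\ge\cdots\ge r_{L_i}\ge 1$, yielding $h^i_{L_i}$.

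Your self-identified ``main obstacle'' (clearance and non-interaction between different rows) is exactly the content of Lemma \ref{321/231claim}; once you have that lemma, the bookkeeping is short. The step you actually got wrong, the within-row dynamics, you treated as routine. Your alternative induction via $\partial_i$ could also be made to work, but it is not what the paper does and you would still need to verify that $\partial_i\bigl(\prod_k h^k_{L_k}\bigr)=h_{L(ws_i)}$ and that pattern avoidance persists under $w\mapsto ws_i$; neither is hard, but neither is stated.
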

\begin{proof}
    We use again the fact that all pipe dreams are obtained from the bottom pipe dream by ladder moves. Here, again all of the ladder moves we can perform are simple ladder moves that just move crossings along their diagonals. Two crossings in the same row of the bottom pipe dream can never slide past each other, and any two rows can slide independently by Lemma \ref{321/231claim}. Thus, row $i$ contributes a factor $h^i_{L(i)}$, and multiplying these factors gives us $\mathfrak{S}_w = \prod_{i}h^i_{L(i)}$.  
\end{proof}

Notice that, unlike the case of SEMs and analogously to the case for usual monomials, the maximal monomial in the CHM expansion of $\mathfrak{S}_w$ is always $h_{L(w)}$.

Finally, we prove that $321$ and $231$ avoidance are necessary conditions in order for $\mathfrak{S}_w$ to be a CHM.

\begin{lemma}
    If $w$ contains a $321$ pattern, then $\mathfrak{S}_w$ is not a CHM. 
\end{lemma}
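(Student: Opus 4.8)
The plan is to contrapose: assuming $\mathfrak{S}_w$ is a single CHM, we show $w$ must avoid $321$. By the second half of Lemma \ref{lemma:induction}, if $\mathfrak{S}_w = h_{\vec a}$ then $a_i = 0$ exactly at ascents, and applying $\partial_i$ at a descent $i$ whose successor $i+1$ is an ascent again yields a single-CHM Schubert polynomial. So, just as in the proof of Lemma \ref{lemma:semnecessary2}, the first move is to reduce to a local configuration: if $w$ contains a $321$ pattern, I would argue (mimicking the pattern-simplification steps in the proof of Lemma \ref{lehmerrules}) that we may pass to a $321$ occurrence $w_i > w_{i+1} > w_k$ with the first two entries adjacent, and moreover — by repeatedly stripping off descents at positions whose predecessor is a descent via $\partial$, which preserves the single-CHM property and the presence of a $321$ pattern — reduce to a permutation where the code near the pattern is forced into a short standard shape. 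The analogue of the SEM argument suggests targeting the configuration where $L_i > L_{i+1}$ by at least $2$ is \emph{not} what goes wrong here (that is allowed for CHMs); rather the obstruction should be an increase of the code by more than one, or two increases without an intervening decrease mirrored on the other side.

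Concretely, I expect the right local picture to be: $L(w) = (\dots, L_i, L_i + k, \dots)$ with $k \ge 2$, or a configuration with two consecutive increases, reflecting that $h_{L(w)}$ is forced to be the maximal monomial (as remarked right before the lemma) while the Schubert polynomial of a permutation containing $321$ has a code shape incompatible with $h_{\vec a}$ being a single term. To finish I would again use Monk's rule as a lever: writing $w' = w s_j$ for an appropriate descent $j$ so that $\mathfrak{S}_{w'}$ is still a single CHM with $a_j = 1$, we would need $h^j_1 \mathfrak{S}_{w'} = \mathfrak{S}_w$ (since $h^j_1 = e^j_1$, Monk's rule applies verbatim), and then exhibit two distinct transpositions $(i_1, i_2)$ with $i_1 \le j < i_2$ such that $w'(i_1, i_2)$ covers $w'$ — one ``small'' ($i_1 = j, i_2 = j+1$-type) and one ``long'' jump supplied by the two entries of the $321$ pattern that straddle position $j$ — contradicting that the product is a single monomial. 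The existence of the long jump is where $321$-avoidance fails: the descending pair in the pattern guarantees an index far to the right whose value slots between the two straddling values, and no intermediate value interferes precisely because of the code shape we reduced to.

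The main obstacle I anticipate is the reduction step — pinning down exactly which local code shape a $321$-containing permutation forces after repeatedly applying $\partial_i$ at isolated descents, and checking that this process genuinely terminates at a permutation small enough to analyze by Monk's rule while still containing a $321$ pattern. In the SEM case (Lemma \ref{lemma:semnecessary2}) the authors could lean on Lemma \ref{lemma:semnecessary1} to know in advance how many decreases follow a big increase; here there is no analogous ``rule 1'' constraint for CHMs, so I would need a direct combinatorial argument that a $321$ pattern cannot coexist with a code whose associated lattice path is a Motzkin-type meander with the CHM-admissible step restrictions. I would structure this as a short lemma: $w$ avoids $321$ and $231$ iff $L(w)$ satisfies the Lehmer-type rules already isolated in the proof of the sufficiency lemma for CHMs (via Lemma \ref{321/231claim}), reducing the present lemma to showing a $321$ pattern violates one of those code rules — at which point the Monk's-rule contradiction is a near-verbatim copy of Lemma \ref{lemma:semnecessary2}. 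The one genuinely new calculation is verifying the covering condition $l(w'(i_1,i_2)) = l(w')+1$ for the long jump, which should follow because all intermediate positions are descents of $w$, exactly as in the SEM case.
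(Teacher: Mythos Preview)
Your overall skeleton --- induct on length, peel off descents via Lemma~\ref{lemma:induction}, and finish with a Monk's-rule contradiction using $h^j_1 = e^j_1$ --- is exactly what the paper does. But the specific route you propose, translating $321$-avoidance into Lehmer-code rules and then copying the argument of Lemma~\ref{lemma:semnecessary2}, does not work as stated. Your guess that the obstruction is ``an increase of the code by more than one, or two increases without an intervening decrease'' is wrong: take $w = 321$ itself, with code $(2,1,0)$ --- purely decreasing, no increases at all. More generally a consecutive $321$ at positions $i,i+1,i+2$ forces two consecutive \emph{decreases} of the code, which is perfectly compatible with any Motzkin-type step restriction you might write down. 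There is no clean code-rule analogue of Lemma~\ref{lehmerrules} for $321$/$231$-avoidance to lean on here, and the paper does not attempt one; it works directly with the pattern positions.

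Concretely, the paper reduces not just to adjacent first two entries but to a fully consecutive $321$ at $i,i+1,i+2$ with $i+2$ an ascent, then applies $\partial_{i+1}$ and runs Monk's rule at level $i+1$: the two covers of $w' = w s_{i+1}$ are $(i+1,i+2)$ and $(i,i')$ where $i'$ is the \emph{first} index past $i+2$ with $w_{i'} > w_i$. Two things you are missing: first, during the reduction, applying $\partial_i$ at a descent $i$ with $i+1$ an ascent can convert the $321$ into a $231$ (when $j=i+1$), so the induction for this lemma is entangled with the $231$ lemma --- you must be prepared to invoke it. Second, the existence of the ``long jump'' $(i,i')$ is not supplied by the descending entries of the $321$ pattern (those are both \emph{below} $w_i$); it comes from the arranged ascent at $i+2$, which guarantees some later value exceeds $w_i$. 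Your sketch locates the long jump on the wrong side.
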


\begin{proof}
    We induct on the length of $w$. First, suppose that $w$ contains a $321$ pattern and $\mathfrak{S}_w$ is a single CHM. Let $i < j < k$ be indices such that $w_i > w_j > w_k$. First, we show how to reduce to the case where $i, j, k = i, i+1, i+2$. 

    If $i$ is an ascent of $w$, choose new indices $i+1, j, k$ which also give us a $321$ pattern. If $i$ and $i+1$ are both descents of $w$, then we have a consecutive $321$ pattern $i, i+1, i+2$. Otherwise, suppose that $i$ is a descent of $w$ and $i+1$ is an ascent. We may apply Lemma \ref{lemma:induction} to conclude that $\partial_i(\mathfrak{S}_w) = \mathfrak{S}_{wt_i}$ is also a single CHM. However, $wt_i$ contains either a $321$ pattern (if $i+1<j$, then indices $i+1, j, k$ give us a $321$ pattern in $wt_i$) or a $231$ pattern (if $i+1=j$, then $i, i+1, k$ are the indices of a $231$ pattern in $wt_i$). Inducting on the length of $w$, we get a contradiction.

    Therefore, we may assume that we have a $321$ pattern given by indices $i, i+1, i+2$. We may also assume that $i+2$ is not a descent of $w$ (otherwise, indices $i+1, i+2, i+3$ also give a $321$ pattern. Consider that pattern instead). By Lemma \ref{lemma:induction}, $\partial_{i+1}(\mathfrak{S}_w) = \mathfrak{S}_{wt_{i+1}}$ is also a single CHM. 

        If $w_{i+3} < w_{i+1}$, then we can still find a $321$ pattern in $wt_j$, given by indices $i, i+2, i+3$. By induction on the length of $w$, this is a contradiction. Otherwise, $i+2$ is an ascent of $wt_j$. In this case, using Lemma \ref{lemma:induction} we deduce that we must have
    \[h^{i+1}_1\mathfrak{S}_{wt_j} = \mathfrak{S}_w\]
    However, similarly to before, this is impossible by Monk's Rule. Namely, in the expansion of $h_1^{i+1}\mathfrak{S}_{wt_j}$ into Schubert polynomials, there a summand corresponding to the transposition $(i+1, i+2)$; there is also at least one more summand given by $(i, i')$, where $i'$ is the first index greater than $i+2$ such that $w_{i'} > w_i$.
\end{proof}

\begin{lemma}
    If $w$ contains a $231$ pattern, then $\mathfrak{S}_w$ is not a CHM. 
\end{lemma}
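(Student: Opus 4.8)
The plan is to mirror the structure of the $321$ case, again inducting on the length of $w$. Suppose for contradiction that $w$ contains a $231$ pattern and $\mathfrak{S}_w$ is a single CHM, say $h_{\vec a}$. Choose indices $i < j < k$ realizing the $231$ pattern, so $w_j > w_i > w_k$ (in relative order $231$, the middle value is largest, the first is second, the last is smallest). As in the earlier lemmas, the first step is to reduce to a \emph{consecutive} pattern. Using Lemma \ref{lemma:induction}: if $i$ is an ascent, slide $i$ up to $i+1$; if $i+1$ is an ascent while $i$ is a descent, apply $\partial_i$ and check that $wt_i$ still contains a $231$ pattern (or produces a $321$ pattern, which we have already excluded) — the bookkeeping here is the same flavor as in the $321$ lemma. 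This lets us assume the $231$ pattern sits on indices $i, i+1, i+2$ with $w_i < w_{i+1}$ (so $i$ is an ascent) and $w_{i+1} > w_{i+2}$, $w_{i+2} < w_i$ (so $i+1$ is a descent).

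Next I would identify the relevant divided-difference step. Since $i+1$ is a descent and $i$ is an ascent, Lemma \ref{lemma:induction} tells us $\partial_{i+1}(\mathfrak{S}_w) = \mathfrak{S}_{wt_{i+1}}$ is again a single CHM, and moreover that in $\vec a$ we have $a_{i+1} = 1$, forcing
\[
h^{i+1}_1 \, \mathfrak{S}_{wt_{i+1}} = \mathfrak{S}_w .
\]
Now expand the left side using the fact that multiplication by $h^{i+1}_1 = e^{i+1}_1$ is governed by Monk's rule (these two degree-one symmetric polynomials agree). The goal is to exhibit at least two distinct transpositions $(i_1,i_2)$ with $i_1 \le i+1 < i_2$ such that $(wt_{i+1})(i_1,i_2)$ covers $wt_{i+1}$, which contradicts the single-term identity above. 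One summand is the obvious $(i_1,i_2) = (i+1,i+2)$, which recovers $w$. For a second, I would use the index $i$: because $i$ is an ascent of $w$ with $w_i$ sitting just below the large value $w_{i+1}$, there should be a second valid transposition pairing $i$ with some later position $i'$ — either $i' = i+2$ directly (if $w_{i+2} < w_i$ there is no value of $wt_{i+1}$ strictly between positions $i$ and $i+2$ lying between $w_i$ and $(wt_{i+1})_{i+2} = w_{i+1}$, so the cover relation holds), or, if that transposition fails the length condition, one has to first peel off another ascent via Lemma \ref{lemma:induction} and re-run the argument, exactly as in the preceding lemma.

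The main obstacle is the combinatorial verification that the second transposition genuinely produces a length-one cover of $wt_{i+1}$: one must check that no intermediate position carries a value lying strictly between the two swapped values. This is where the $321$-avoidance already established is likely needed — if such an intermediate value existed, it would, together with the $231$ pattern, force a $321$ pattern in $w$, which is impossible. So the cleanest write-up assumes both "$w$ contains $231$" and "$w$ avoids $321$" (the latter being legitimate since the $321$ case is handled by the previous lemma), and then the potential obstruction to the cover relation is ruled out by pattern-avoidance rather than by a direct computation. Once the two summands are in hand, the contradiction with $h^{i+1}_1 \mathfrak{S}_{wt_{i+1}} = \mathfrak{S}_w$ is immediate, completing the induction and the proof.
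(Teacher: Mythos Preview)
Your outline follows the paper's strategy closely --- reduce to a consecutive $231$ at $i,i+1,i+2$, then use Monk's rule on $h^{i+1}_1\mathfrak{S}_{wt_{i+1}}$ to produce two cover relations, one of which is $(i+1,i+2)$ and the other $(i,i+2)$. That part is right and matches the paper exactly.

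There is, however, a concrete error in how you invoke Lemma~\ref{lemma:induction}. You write ``since $i+1$ is a descent and $i$ is an ascent, Lemma~\ref{lemma:induction} tells us $\partial_{i+1}(\mathfrak{S}_w)$ is again a single CHM.'' That is the SEM hypothesis, not the CHM one: because $\partial_{i+1}(h^{i+1}_a)=h^{i+2}_{a-1}$, the CHM clause of Lemma~\ref{lemma:induction} requires the position \emph{after} the descent --- namely $i+2$ --- to be an ascent. Knowing that $i$ is an ascent is irrelevant here. You never verify that $i+2$ is an ascent of $w$, so as written the lemma does not apply. The paper supplies exactly this missing step: if $i+2$ were a descent then $w_{i+1}>w_{i+2}>w_{i+3}$ gives a $321$ pattern at $i+1,i+2,i+3$, already excluded by the previous lemma. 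Once you insert that observation, Lemma~\ref{lemma:induction} applies in the correct direction and the rest of your argument goes through.

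A smaller point: your claim that Lemma~\ref{lemma:induction} forces $a_{i+1}=1$ is not something the lemma states; it only gives $a_{i+1}>0$. The equality $a_{i+1}=1$ (equivalently, that $i+2$ is an ascent of $wt_{i+1}$) needs its own justification before you can write $h^{i+1}_1\mathfrak{S}_{wt_{i+1}}=\mathfrak{S}_w$. The paper is also terse here, but this step deserves a sentence.
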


\begin{proof}
Let $i < j < k$ be indices such that $w_k < w_i < w_j$. By similar reasoning to before, we may assume that $j=i+1$ and $k=i+2$ by considering whether $i$ is an ascenet of $w$, $i, i+1$ are both descents of $w$, or $i$ is a descent of $w$ and $i+1$ is an ascent; we leave these details to the reader. 

    Now, given that $i, i+1, i+2$ form a $231$ pattern, we may also assume that $i+2$ is not a descent of $w$; otherwise, $i+1, i+2, i+3$ form a $321$ pattern, which we have already ruled out. By lemma \ref{lemma:induction}, we deduce that we must have
    \[h^{i+1} \cdot \mathfrak{S}_{wt_{i+1}} = \mathfrak{S}_w\]
    But once again, we get a contradiction to Monk's Rule. Namely, $w$ covers $wt_{i+1}$ in the weak Bruhat order, but so does $wt_{i+1}t_{(i, i+2)}$. Thus, the expansion of $h^{i+1} \cdot \mathfrak{S}_{wt_{i+1}}$ in the Schubert basis has at least two summands and cannot be equal to $\mathfrak{S}_w$. By contradiction, $w$ cannot contain a $231$ pattern. 
\end{proof}

\section{Further Directions}

Why does the pattern $1432$ appear in Theorem \ref{patternavoidanceSEMs}? Recall that in our proof of Lemma \ref{lemma:semsufficient}, the only ladder moves we could apply to reduced pipe dreams of $w$ were \emph{simple} ladder moves. Gao \cite{principalspecializations} showed more generally that $w$ avoids the pattern $1432$ if and only if all reduced pipe dreams for $w$ are related by simple ladder moves. Motivated by this fact, we ask the following questions:

\begin{enumerate}
    \item Can we find a cancellation-free formula for the SEM expansion of $\mathfrak{S}_w$ in the case where $w$ is $1432$-avoiding, perhaps in terms of pipe dreams?
    \item Is there a bound in terms of the number of $1432$ patterns in $w$ on the number of terms in the SEM expansion of $w$? (Or in terms of the number of Lehmer rule violations, or in terms of the number of $312$ patterns?)
    \item In particular, is there a nice description of $w \in S_n$ such that the SEM expansion of $\mathfrak{S}_w$ has only two terms?
\end{enumerate}

\bibliographystyle{plain}
\bibliography{ref}

\end{document}